\documentclass[12pt]{amsart}
\usepackage[margin=1.2 in]{geometry}

\usepackage{graphicx} 
\usepackage{amsmath,amsthm,amssymb,amsfonts}
\usepackage{mathtools}
\usepackage{physics}
\usepackage{hyperref}
\usepackage[dvipsnames]{xcolor}
\usepackage{tikz-cd}
\usepackage{comment}
\usepackage[doi=false,isbn=false,url=false,eprint=false]{biblatex}
\newtheorem{theorem}{Theorem}[section]

\newtheorem{lemma}[theorem]{Lemma}
\newtheorem{Corollary}[theorem]{Corollary}
\theoremstyle{definition}

\newtheorem*{remark}{Remark}

\DeclareMathOperator{\R}{\mathbb{R}}

\DeclareMathOperator{\Z}{\mathbb{Z}}
\DeclareMathOperator{\N}{\mathbb{N}}

\newcommand{\deriv}[2][]{\frac{d #1}{d #2}}

\newcommand{\nor}{\mathsf{N}}
\newcommand{\tang}{\mathsf{T}}

\newcommand{\ttime}{\tilde{t}}

\newcommand{\tkappa}{\tilde{\kappa}}
\newcommand{\tsigma}{\tilde{\sigma}}

\renewcommand{\l}{\lambda}
\newcommand{\ovth}{\overline\theta}
\newcommand{\unth}{\underline\theta}
\newcommand{\osigma}{\overline\sigma}

\title[Stability of the Shrinking Semi-Circle]{Stability of the Shrinking Semi-Circle Under the Free Boundary Curve Shortening Flow}

\author{Theodora Bourni}\address{Department of Mathematics, University of Tennessee, Knoxville, TN, 37996, USA}\definecolor{armygreen}{rgb}{0.36, 0.54, 0.66}
\author{Nathan Burns}\address{Department of Mathematics, University of Tennessee, Knoxville, TN, 37996, USA}\definecolor{armygreen}{rgb}{0.36, 0.54, 0.66}
\author{Mat Langford}\address{Mathematical Sciences Institute, Australian National University, Canberra, ACT, 0200, Australia}\definecolor{airforceblue}{rgb}{0.36, 0.54, 0.66}

\begin{document}

\begin{abstract} We establish a sharp rate of convergence for a free-boundary curve shortening flow in a convex domain in $\R^2$ which converges in finite time to a round half-point.
\end{abstract}

\maketitle


\section{Introduction}
Mean curvature type flows provide a fundamental framework for understanding the geometric and analytic properties of submanifolds through curvature driven evolution. Among these, the curve shortening flow has played a central role as a model problem and  understanding the long-time behavior and singularity formation of solutions to curve shortening flow has led to deep insights and has served as a testing ground for techniques later extended to higher-dimensional mean curvature flow. In this work, we focus on the free-boundary curve shortening flow in a convex domain of $\R^2$,
with particular emphasis on its long-time behavior. 

In the case of convex embedded curves without boundary, Gage and Hamilton showed that the flow contracts the curve to a round point with exponential convergence \cite{GaHa86}. Embeddedness is essential: Angenent exhibited examples of plane curves developing cusps under curve shortening flow \cite{Angenent1991}. Convexity, however, is not necessary, as Grayson proved that any simple closed curve becomes convex under the flow \cite{Grayson}.
Analogous results hold in higher dimensions. For compact, convex hypersurfaces without boundary, Huisken showed that the hypersurfaces shrink to a round point in finite time under mean curvature flow \cite{Hooken2}.

When boundaries are present, the analysis becomes more subtle due to the interaction between curvature effects and boundary conditions. The problem was first addressed by Huisken in the graphical setting \cite{Hooken3}, and later in greater generality by Stahl \cite{Stahl2}, who proved that a convex hypersurface with free boundary on a convex domain converges to a  point. In dimensions greater than two, Stahl also proved roundness of the limiting profile. 
For planar curves, however, roundness of the limit remained open until recently. This question was resolved in \cite{fbcsf}, where it was shown that any compact, embedded free-boundary curve shortening flow in a convex planar domain of class $C^2$
 either converges in infinite time to a critical chord or shrinks in finite time to a round half-point \cite{fbcsf}. This result can be viewed as the free-boundary analogue of Grayson's theorem \cite{Grayson}, and it also improves upon Stahl's work, even in the convex case, by establishing convergence to a \emph{round} half-point.

However, the blow-up analysis in \cite{fbcsf} does not provide a rate of convergence to the shrinking semicircle after rescaling about the extinction point. In this work, we address this gap by establishing a sharp rate of convergence to the shrinking semicircle via a stability analysis. This yields a level of quantitative precision comparable to the classical closed-curve case treated in \cite{GaHa86}.
Quantitative estimates and a precise geometric understanding of the limiting behavior play a crucial role in uniqueness and classification results. For instance, \cite{Wang2002} uses the forward asymptotics of Gage–Hamilton to prove uniqueness of the shrinking circle among convex entire ancient solutions. Similarly, \cite{ADS2019, BourniLangfordTinaglia2021} establish sharp backward-in-time asymptotics to deduce rotational symmetry of the solutions.

Our main result shows that, after appropriate rescaling about the extinction point, solutions converge to the shrinking semicircle with an explicit and sharp rate in all $C^k$ norms.

\begin{theorem}
	Let \(\Omega \subset \R^{2}\) be a convex domain with \(C^{2}\) boundary and let \(\{\Gamma_{t}\}_{t \in [0,T)}\) be a maximal free boundary curve shortening flow starting from a properly embedded curve \(\Gamma_{0}\) inside \(\Omega\). If \(T < \infty\), then \(\Gamma_{t}\) converges uniformly to some \(p \in \partial\Omega\), and, denoting by  \(\mathcal R\) the rotation of \(\R^{2}\) which maps the outward normal to \(\partial\Omega\) at \(p\) to \(-e_{2}\), the rescaled curves 
	\[
	\tilde{\Gamma}_{t} \coloneqq \frac{\mathcal R(\Gamma_{t} - p)}{\sqrt{2(T-t)}}
	\]
	converge as \(t \to T\) to the unit semi-circle in \(\R^{2}_{+}\) uniformly in the \(C^{k}\)-topology, for every \(k \geq 0\), at a rate no slower than \((T-t)^{1-\delta}\), for any \(\delta > 0\).
\end{theorem}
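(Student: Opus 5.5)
We take as given the qualitative result of \cite{fbcsf}: $\Gamma_t\to p\in\partial\Omega$, and the rescaled curves $\tilde\Gamma_t$ converge smoothly (in every $C^k$) to the unit semicircle, with no rate. The task is to upgrade this to the rate $(T-t)^{1-\delta}$. We pass to rescaled time $\tau=-\tfrac12\log(T-t)$. In this time the curves $\tilde\Gamma_\tau$ evolve by the rescaled flow $\partial_\tau X=-\kappa\nor+X^\perp$. They have free boundary on the rescaled domain $\tilde\Omega_\tau=\mathcal R(\Omega-p)/\sqrt{2(T-t)}$. Since $\partial\Omega$ is $C^2$ and tangent at $p$ to the line $\{x_2=0\}$ after rotation, $\partial\tilde\Omega_\tau$ converges to the $x_1$-axis. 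In $C^2$ on compact sets the deviation is $O(\sqrt{T-t})=O(e^{-\tau})$, since the curvature of $\partial\tilde\Omega_\tau$ is $\sqrt{2(T-t)}\,\kappa_{\partial\Omega}$. The target rate $(T-t)^{1-\delta}=e^{-2(1-\delta)\tau}$ therefore reflects a decay exponent of $2$ up to $\delta$. This exponent must come from the spectral gap of the linearized operator, not from the boundary perturbation, which is only $e^{-\tau}$. We must therefore exploit the geometry more carefully: the correct comparison is the \emph{translated} and tilted semicircle, or the boundary terms must be shown to enter only at order $e^{-2\tau}$.

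We write $\tilde\Gamma_\tau$ as a normal graph of a function $u(\theta,\tau)$, $\theta\in[0,\pi]$, over the unit semicircle, suitably bent near its endpoints so as to meet $\partial\tilde\Omega_\tau$ orthogonally. We then derive the equation
\[u_\tau=u_{\theta\theta}+u+Q(u)+E(\tau),\]
with a Neumann-type condition at $\theta=0,\pi$ coming from orthogonality. Here $Q$ is quadratic in $(u,u_\theta,u_{\theta\theta})$, and $E$ together with the boundary inhomogeneity are the errors from the curvature of $\partial\Omega$. The linear operator $L=\partial_\theta^2+1$ with Neumann conditions on $[0,\pi]$ has eigenfunctions $\cos(k\theta)$ with eigenvalues $1-k^2$. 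The mode $k=0$ is unstable, with eigenvalue $1$, and corresponds to changing $T$. The mode $k=1$ is neutral: $\cos\theta$ is horizontal translation along the boundary, i.e.\ changing $p$. The mode $k=2$ decays like $e^{-3\tau}$.

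The unstable and neutral modes are excluded because $T$ and $p$ are the true extinction time and point. We run the standard argument: the smooth convergence from \cite{fbcsf} gives $u\to0$, so the projections onto $k=0,1$ cannot grow. An ODE/Merle–Zaag-type lemma then shows that the projections are controlled by the stable part plus the forcing. Here the rotation $\mathcal R$ and the vanishing of the first-order term in the boundary (tangency at $p$) are used. The key is to show that the forcing terms are $O(e^{-2\tau})$ in the relevant norms. The $e^{-\tau}$ boundary error comes from the normal direction of $\partial\tilde\Omega_\tau$ at the contact points. It is first order only through the tilt of $\partial\Omega$ at points at distance $O(\sqrt{T-t})$ from $p$. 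When this tilt is projected onto the neutral mode $\cos\theta$, it is antisymmetric, and after exact cancellation it contributes only at second order. Recentring $p$ via the $k=1$ mode absorbs the residual. Energy estimates for $\|u\|_{L^2}$, combined with the quadratic smallness of $Q$, then give $\|u(\tau)\|\le C_\delta e^{-2(1-\delta)\tau}$; the $\delta$ loss comes from integrating the forcing and the nonlinear terms. Interpolation with uniform $C^k$ bounds from \cite{fbcsf} via parabolic Schauder estimates upgrades this to every $C^k$, at the cost of an arbitrarily small further loss absorbed into $\delta$.

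The main obstacle is the boundary. First, the graph parametrization must accommodate a moving, curved Neumann boundary. Second, we must show that the boundary forcing actually decays like $e^{-2\tau}$ rather than $e^{-\tau}$ in the directions that matter, including the $\cos\theta$ mode. I would first try to handle this by straightening $\partial\tilde\Omega_\tau$ with an explicit diffeomorphism. This diffeomorphism is $e^{-\tau}$-close to the identity and quadratic in $x_1$, using the $C^2$ Taylor expansion of $\partial\Omega$ at $p$, and I would track the resulting terms explicitly.
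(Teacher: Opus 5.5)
\textbf{Gap: the boundary forcing in the stable modes is of order $e^{-\tau}$, not $e^{-2\tau}$.} Your rate depends on this step, and it fails. As a result the target $e^{-2(1-\delta)\tau}$ for the rescaled curves cannot be reached whenever $\kappa_{\partial\Omega}(p)\neq 0$.

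In the Gauss-map formulation the free-boundary condition reads $\tilde\sigma_\theta(\underline\theta)=-\tilde\sigma_\Sigma$ and $\tilde\sigma_\theta(\overline\theta)=\tilde\sigma_\Sigma$. At both contact angles $\tilde\sigma_\Sigma\approx\tfrac12\kappa_{\partial\Omega}(p)\sqrt{2(T-t)}\sim e^{-\tau}$. This datum is even about $\theta=\pi/2$: it is the Neumann datum of $-c\sin\theta$.

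Testing the linearised equation against $\cos(j\theta)$ therefore produces the boundary term $[\tilde\sigma_\theta\cos(j\theta)]_{\underline\theta}^{\overline\theta}\approx(1+(-1)^j)\,\tilde\sigma_\Sigma$. Your antisymmetry cancellation is exactly the vanishing of this term for $j=1$. For every even $j$ it is of size $e^{-\tau}$. In particular, the coefficient $a_2$ of the stable mode $\cos2\theta$ obeys $a_2'=-2a_2+b\,e^{-\tau}+\dots$, with $b$ a nonzero multiple of $\kappa_{\partial\Omega}(p)$. Hence $a_2\sim b\,e^{-\tau}$.

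Since $\cos2\theta$ is orthogonal to the scaling and translation modes, no choice of $T$, $p$ or modulation removes it. Straightening $\partial\tilde\Omega_\tau$ by a diffeomorphism only turns the same $O(e^{-\tau})$ boundary term into an interior source. More directly, $|\tilde\sigma_\theta|=\tilde\sigma_\Sigma$ at the endpoints already gives $\|\tilde\sigma-1\|_{C^1}\gtrsim\kappa_{\partial\Omega}(p)\sqrt{T-t}$. This is precisely the paper's closing remark.

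Your spectral bookkeeping is also off by one. With $\tau=-\tfrac12\log(T-t)$ the linearisation is $\partial_\theta^2+2$, with eigenvalues $2-k^2$. So horizontal translation is unstable, not neutral: a shift by $\epsilon$ becomes $\epsilon/\sqrt{2(T-t)}\sim e^{\tau}$. And $\cos2\theta$ decays like $e^{-2\tau}$, not $e^{-3\tau}$, so $e^{-2\tau}$ is already the generic ceiling even for a flat barrier.

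\textbf{What the paper proves, and how your outline compares.} The paper shows $\|\tilde\sigma-1\|_{C^k}\le C(k,\alpha)e^{-\alpha\tilde t}$ for every $\alpha<1$, where $e^{-\tilde t}\sim\sqrt{2(T-t)}$. This is a rescaled rate $(T-t)^{(1-\delta)/2}$, equivalently $\sigma=\sqrt{2(T-t)}+O((T-t)^{1-\delta})$ for the unscaled support function. The exponent $1-\delta$ in the statement has to be read in this sense: as the exponent of $e^{-\tilde t}$, which matches the ``$2-\delta$'' quoted for a flat barrier, or as the order of the unscaled error. Read literally as a rate $(T-t)^{1-\delta}$ for $\tilde\Gamma_t$, it is false by the argument above.

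Retargeted to $e^{-(1-\delta)\tau}$, your outline is close to the paper's. The genuine difference is how the two unstable modes are handled. You fix $T,p$ and integrate their projections back from $\tau=\infty$. That is a legitimate alternative, since both modes are genuinely unstable and forced by $O(e^{-\tau})+O(\|\tilde\sigma-1\|_{C^2}^2)$. The paper instead modulates the scale $\lambda(t)$ and centre $p(t)$ so that an enclosed-area functional and a centre-of-mass functional vanish identically. Exact evolution identities then give those projections as $O(e^{-\tilde t})+O(\|\tilde\sigma-1\|_{C^2}^2)$.

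The paper then proceeds as follows:
\begin{itemize}
\item It handles the moving domain $[\underline\theta,\overline\theta]$ by a $C^1$ piecewise-quadratic Neumann extension to $[0,\pi]$.
\item It derives $\frac{d}{d\tilde t}\|\tilde\sigma-1\|_{L^2}^2\le Ce^{-2\tilde t}-(4-\epsilon)\|\tilde\sigma-1\|_{L^2}^2$.
\item It concludes $\|\tilde\sigma-1\|_{L^2}\le Ce^{-\tilde t}$ and interpolates to $C^k$.
\end{itemize}
The $Ce^{-2\tilde t}$ source in that inequality is where the $O(e^{-\tilde t})$ boundary effects enter, and it is what caps the rate.
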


The proof is based on a stability analysis of the shrinking semicircle as a self-similar solution of the rescaled flow. The linearized operator possesses unstable modes corresponding to time translation and horizontal translation, which obstruct direct convergence estimates. To overcome this difficulty, we introduce a normalization of the flow that dynamically modulates these symmetries by fixing two geometric quantities: the enclosed area between the curve and the support curve, and a suitably defined center of mass. With this choice of normalization, the projections onto the unstable modes can be perturbed away, allowing us to close the stability estimates and obtain sharp convergence rates.

Our approach is inspired by Andrews’ treatment of the affine normal flow for closed hypersurfaces \cite[\S16.5.2]{MR4249616}, adapted here to the free-boundary setting and to the specific structure of the curve shortening flow. The presence of the boundary however creates a lot more difficulties. A significant problem is that using the Gauss map parametrization the domain of definition is not constant in time. This causes the stability analysis after linearizing around a shrinking semicircle far from trivial.

We remark that, in the special case where the domain $\Omega$ is a hyperplane, the decay rate given in the main theorem can be improved to $2-\delta$ (instead of $1-\delta$). This follows from the proof of the theorem with minor modifications. Of course, this case can also be derived from the Gage–Hamilton theorem \cite{GaHa86}, after reflecting the solution.

\section{Preliminaries}

Consider a compact solution $\{\Gamma_t\}_{t\in[0,T)}$ to the free boundary curve shortening flow in a convex, locally uniformly convex domain \(\Omega \subset \R^{2}\). Denoting the boundary of this domain by \(\Sigma \coloneqq \partial \Omega\), suppose that \(\Gamma_t\) converges to the round half-point \(p \in \Sigma\) as \(t \to T < \infty\); i.e., 
\begin{equation}\label{1-conv}
	\frac{\Gamma_t - p}{\sqrt{2(T-t)}} \overset{t \to T}{\to} \mathcal C ^+ 
\end{equation}
in the smooth topology, where \(\mathcal C^+\) is a unit semi-circle in \(\overline{H}\), the closed half-plane which supports \(\Omega - p\) at the origin. Up to a rigid motion, a time-translation and a parabolic rescaling, we may arrange so that \(p\) is the origin, that \(\overline{H}\) is the upper half-plane and that the initial curve \(\Gamma_{0} \) is \(\varepsilon\)-close in the smooth topology to the unit semi-circle in $\overline H$, for any \(\varepsilon > 0\) we like. 
In particular, \(\Gamma_{t}\) will be convex for \(t \geq 0\) and therefore we may consider the boundary value problem for its support function \(\sigma(\cdot,t)\). Adopting a Gauss map parametrisation, that is parametrizing by  the angle \(\theta \in [\underline{\theta}(t),\overline{\theta}(t)]\) the outward unit normal \(\nor = (\cos\theta, \sin\theta)\) makes with the positive \(x\)-axis, then \(\sigma\) satisfies the following problem:
\begin{equation}\label{eq:support_pde}
	\begin{cases}
		\sigma_{t}(\theta, t) &= -(\sigma_{\theta\theta} + \sigma)^{-1}\, ,\,\, (\theta, t) \in (\underline{\theta}, \overline{\theta})\times[0, T)\,, \\
		\sigma_{\theta}(\underline{\theta}, t) &= -\sigma_{\Sigma}(\overline{\theta}_{\Sigma})\,,\,\,t\in [0, T)\,,\\
		\sigma_{\theta}(\overline{\theta}, t) &= \sigma_{\Sigma}(\underline{\theta}_{\Sigma})\,,\,\,t\in [0, T)\,,
	\end{cases}
\end{equation}
where \(\sigma_{\Sigma}\) is the support function of \(\Sigma\) parametrised by the Gauss map so that 
\begin{equation}\label{thetaS}
	(\underline{\theta}_{\Sigma}(t), \overline{\theta}_{\Sigma}(t)) = (\tfrac{\pi}{2} + \overline{\theta}(t), \tfrac{3\pi}{2} + \underline{\theta}(t)).
\end{equation}
Note that, for notational convenience, we will often omit the argument $t$ in the endpoints of the angle intervals $\overline \theta, \underline{\theta}, \overline{\theta}_{\Sigma}, \underline{\theta}_{\Sigma}$, as we did for example in \eqref{eq:support_pde}.

By the smooth convergence in \eqref{1-conv}, there exists a constant $C$, such that the curvature $\kappa$ of the evolving curves satisfies
\begin{equation}\label{eq:curvature_estimates1}
	C^{-1} \leq \sqrt{2(T-t)} \kappa(\theta, t) \leq C\,,\,\, \forall (\theta, t)\in [\underline{\theta}, \overline{\theta}]\times[0, T) \,.
\end{equation}
Since 
\begin{equation}\label{thetaev}
\deriv{t}{\overline{\theta}} = \kappa_{\Sigma}(\underline\theta_\Sigma)\kappa(\overline\theta,t)\;\;\text{and}\;\; \deriv{t}{\underline{\theta}} = -\kappa_{\Sigma}(\overline\theta_\Sigma)\kappa(\underline\theta,t)\,,
\end{equation}
where \(\kappa_{\Sigma}\) is the curvature of the boundary $\Sigma$, we obtain
\begin{equation}\label{eq:motion_of_boundary_angles}
	\frac{C^{-1}}{\sqrt{2(T - t)}} \leq \deriv{t}{(\overline{\theta} - \underline{\theta})} \leq \frac{C}{\sqrt{2(T-t)}}.
\end{equation}
Integrating this estimate from time \(t\) to \(T\) yields, 
\begin{equation}\label{eq:difference_in_angle}
	C^{-1}\sqrt{2(T-t)} \leq \pi - (\overline{\theta} - \underline{\theta}) \leq C\sqrt{2(T-t)}.
\end{equation}

\subsection{Linear stability}\label{sub-lenearstability}

The linearisation of \eqref{eq:support_pde} at the shrinking semi-circle  is given by
\[
	\begin{cases}
		v_{t}(\theta, t) &= \frac{-1}{2t}(v_{\theta\theta}(\theta, t) + v(\theta, t))\,,\,\, \theta \in (0,\pi)\,,\\
		v_{\theta}(0, t) &= 0\,, \\
        v_{\theta}(\pi, t) &= 0\,,
	\end{cases}
\]
where $t\in (-\infty, 0)$.
Introducing the rescaled variables
\[
	\tilde{v}(\theta,\tilde t) := e^{\tilde t}v(\theta,t)\,,\;\; \tilde t = -\frac{1}{2}\log(-2t)
\]
transforms this equation into
\[
	\begin{cases}
		\tilde v_{\tilde t}(\theta, \tilde t) &= \tilde v_{\theta\theta}(\theta, \tilde t) + 2\tilde v(\theta, \tilde t)\,,\,\, \theta \in (0,\pi)\,,\\
		\tilde v_{\theta}(0, \tilde t) &= 0\,, \\
        \tilde v_{\theta}(\pi, \tilde t) &= 0\,,
	\end{cases}
\]	
where $\tilde t\in (-\infty, +\infty)$ here. Separation of variables then gives the representation 
\[
	\tilde v(\theta,\tilde t ) = \sum_{j = 0}^{\infty}C_{j}e^{(2-j^{2})\tilde t }\cos(j\theta).
\]
Note that the unstable mode \(j = 0\) corresponds to time-translations and the unstable mode \(j = 1\) corresponds to horizontal translation. Indeed, denoting by $\gamma(\cdot, t)$ the Gauss map parametrization of $\Gamma_t$, if \(\gamma_{\varepsilon}(\theta,t) \coloneqq \gamma(\theta,t - \varepsilon)\), then \(\tfrac{d}{d\varepsilon}\big|_{\varepsilon = 0}\ev{\gamma_{\varepsilon}, \nor} = 1\), and if \(\gamma_{\varepsilon}(\theta,t) \coloneqq \gamma(\theta,t) + \varepsilon e_{1}\), then \(\tfrac{d}{d\varepsilon}\big|_{\varepsilon = 0}\ev{\gamma_\varepsilon, \nor} = \cos\theta\).

\subsection{The enclosed area functional} \label{areasection}

Consider a fixed convex domain \(U\subset \Omega\) bounded by a locally uniformly convex free boundary portion \(\Gamma\) and convex barrier portion of $\partial \Omega$, which, by abusing notation, we will still denote by \(\Sigma\). By denoting with $\nor_\Sigma$ the outward pointing unit normal to $\Sigma$, and $X$ the position vector, the area $|U|$, enclosed by $\Gamma$ and $\Sigma$, is given by 
\begin{align*}
	2|U| &= 2\int_{U}1dX = \int_{U}\text{div}{X}dX = \int_{\Gamma}X \cdot \nor ds + \int_{\Sigma}X \cdot \nor_{\Sigma}ds \\
	&= \int_{\underline{\theta}}^{\overline{\theta}}\frac{\sigma}{\kappa}d\theta + \int_{\Sigma}X \cdot \nor_{\Sigma}ds_{\Sigma} = \int_{\underline{\theta}}^{\overline{\theta}}\sigma(\sigma_{\theta\theta} + \sigma)d\theta + \int_{\Sigma}X \cdot \nor_{\Sigma}ds \\
	&\eqqcolon \mu(\sigma) + \mu_{\Sigma}.
\end{align*}
The first variation of \(\mu\) at \(\sigma = 1\) and along \(v \coloneqq \tfrac{d}{d\varepsilon}\big|_{\varepsilon = 0}\sigma_{\varepsilon}\) is given by 
\[
	D\mu\big|_{1}(v)=\frac{d}{d\varepsilon}\bigg|_{\varepsilon = 0}\mu(\sigma_{\varepsilon}) = v_{\theta}\big|_{\underline{\theta}}^{\overline{\theta}} + 2\int_{\underline{\theta}}^{\overline{\theta}}vd\theta\,.
\]
Therefore,
\begin{align*}
	2|U| &= \mu_{\Sigma} + \mu(1) + D\mu\big|_{1}(\sigma - 1) + O(\|\sigma - 1\|^{2}_{C^{2}}) \\
	&= \mu_{\Sigma} + \overline{\theta} - \underline{\theta} + (\sigma - 1)_{\theta}\big|_{\underline{\theta}}^{\overline{\theta}} + 2\int_{\underline{\theta}}^{\overline{\theta}}(\sigma - 1)d\theta + O(\|\sigma - 1\|^{2}_{C^{2}}) \\
	&= \mu_{\Sigma} + \overline{\theta} - \underline{\theta} + \sum_{X\in \partial\Sigma}X \cdot \nor_{\Sigma} + 2\int_{\underline{\theta}}^{\overline{\theta}}(\sigma - 1)d\theta + O(\|\sigma - 1\|^{2}_{C^{2}})\,,
\end{align*}
which yields
\begin{equation}\label{eq:unstable_node1}
	2\int_{\underline{\theta}}^{\overline{\theta}}(\sigma - 1)d\theta = 2|U| - (\overline{\theta} - \underline{\theta}) - \mu_{\Sigma} - \sum_{X\in \partial\Sigma}X \cdot \nor_{\Sigma} + O(\|\sigma - 1\|^{2}_{C^{2}}).
\end{equation}

\subsection{The center of mass functional} \label{CMfunctional}
With the same setup as in Section \ref{areasection},  we define the center of mass to be 
\[
q(U) = \int_{U}X\cdot e_1dX\,,
\]
for a domain $U$ bounded by  a locally uniformly convex free boundary portion \(\Gamma\) and a locally uniformly convex barrier portion \(\Sigma\). The center of mass is then determined by 
\begin{align*}
	2q(U) &= 2\int_{U}X\cdot e_1dX = \int_{U}(X\cdot e_1)\text{div}XdX = \int_{U}(\text{div}((X\cdot e_1)X) - X\cdot e_1)dX,
\end{align*}
which gives 
\begin{align*}
	3q(U) &= \int_{U}\text{div}((X\cdot e_1) X)dX = \int_{\Gamma}(X\cdot e_1)X \cdot \nor ds + \int_{\Sigma}(X\cdot e_1)X\cdot \nor_{\Sigma}ds \\
	&= \int_{\underline{\theta}}^{\overline{\theta}}(\sigma\cos\theta + \sigma_{\theta}\sin\theta)\sigma(\sigma_{\theta\theta} + \sigma)d\theta + \int_{\Sigma}(X\cdot e_1)X \cdot \nor_{\Sigma}ds \\
	&\eqqcolon \nu(\sigma) + \nu_{\Sigma}\,.
\end{align*}
The first variation of \(\nu\) at \(\sigma = 1\) along  \(v \coloneqq \tfrac{d}{d\varepsilon}\big|_{\varepsilon = 0}\sigma_{\varepsilon}\) is given by
\[
	D\nu\big|_{1}(v)=\frac{d}{d\varepsilon}\bigg|_{\varepsilon = 0}\nu(\sigma_{\varepsilon}) = 2v\sin\theta\big|_{\underline{\theta}}^{\overline{\theta}} + v_{\theta}\cos\theta\big|_{\underline{\theta}}^{\overline{\theta}} + \int_{\underline{\theta}}^{\overline{\theta}}v\cos\theta d\theta\,.
\]
 So preceding as with the area functional in subsection \ref{areasection} we have
\begin{align*}
	3q(U) =& \nu_{\Sigma} + \nu(1) + D\nu\big|_{1}(\sigma - 1) + O(\|\sigma - 1\|_{C^{2}}^{2}) \\
	=& \nu_{\Sigma} + [\sigma\sin\theta + \sigma_{\theta}\cos\theta + (\sigma - 1)\sin\theta]_{\underline{\theta}}^{\overline{\theta}} + \int_{\underline{\theta}}^{\overline{\theta}}(\sigma - 1)\cos\theta d\theta + O(\|\sigma - 1\|_{C^{2}}^{2}) \\ 
	=&  \int_{\Sigma}(X\cdot e_1)X \cdot \nor_{\Sigma}ds+ [X \cdot e_{2}]_{\unth}^{\ovth}+\sum_{\partial\Gamma}(1-X\cdot\nor )\nor_{\Sigma} \cdot e_{1}  \\
    &+ \int_{\underline{\theta}}^{\overline{\theta}}(\sigma-1)\cos\theta d\theta+ O(\|\sigma - 1\|_{C^{2}}^{2})\,.
\end{align*}
Here we have used the identity
\[
X\cdot e_2=(\sigma\nor +\sigma_\theta \tang)\cdot e_2=\sigma\sin\theta+\sigma_\theta\cos\theta\,,
\]
where $\tang $ is the unit tangent to the curve (counterclockwise rotation on $\nor$ by $\pi/2$). We also adopt  the notation for the difference of a function evaluated at the boundary points:  $[f]^{\ovth}_{\unth}=f(\ovth)-f(\unth)$, which typically arises from integrating a derivative.
Therefore, we obtain
\begin{equation}\label{eq:unstable_node2}
\begin{split}
	\int_{\underline{\theta}}^{\overline{\theta}}(\sigma - 1)\cos\theta\, d\theta = &3q(U) -\int_{\Sigma}(X\cdot e_1)\sigma _{\Sigma}ds - [X \cdot e_{2}]_{\unth}^{\ovth} \\
    &+ \sum_{\partial \Gamma}(1-\sigma)(\nor_{\Sigma} \cdot e_{1})+O(\|\sigma - 1\|_{C^{2}}^{2}).
    \end{split}
\end{equation}

\section{Evolution equations}\label{evolutions}

We now consider the enclosed area \(|U_{(t)}|\) and the center of mass \(q(U_{(t)})\) associated with the evolving curve \( \Gamma_t \), as defined in Sections~\ref{areasection} and~\ref{CMfunctional}. After an appropriate translation and rescaling, we expect that these quantities satisfy \( |U_{(t)}|-(\ovth-\unth) \sim 0 \) and \( q(U_{(t)}) \sim 0 \). We therefore derive their evolution equations for the rescaled flow,
\begin{equation}\label{rescaled flow}
\tilde{\Gamma}_{\ttime}
= \lambda(t)\bigl(\Gamma_t - p(t)e_1\bigr)\,,\;\;\frac{d\ttime}{dt}=\l^2.
\end{equation}

We will use the Gauss map parametrization for both the evolving curves and \(\Sigma\), which we denote by \(\gamma\) and \(\gamma_\Sigma\), respectively. Recall that the support function of \(\Sigma\), denoted by \(\sigma_\Sigma\), is defined on the interval \([\unth_\Sigma, \ovth_\Sigma]\), while the support function \(\sigma(\cdot,t)\) of \(\Gamma_t\) is defined on \([\unth, \ovth]\), with the two intervals related as described in \eqref{thetaS}. To simplify notation, we suppress the explicit dependence on \(t\) in the domains, and we adopt the same convention for other quantities throughout.

At this point, it is convenient to introduce a boundary notation that simplifies the exposition. We use the notation \(\{f\}\) to denote the sum of a function evaluated at the boundary points of \(\Gamma_t\); that is,
\[
\{f\} := \sum_{\partial \Gamma_t} f .
\]
Note that \(\partial \Gamma_t = \partial \Sigma\). Since we are using the Gauss map parametrization, the boundary points correspond to the angles \(\ovth\) and \(\unth\) when referring to functions defined on \(\Gamma_t\), and to \(\unth_\Sigma\) and \(\ovth_\Sigma\) when referring to functions defined on \(\Sigma\). Because all functions are expressed in terms of the angular parameter, the meaning of this notation will be clear from the context. For example,
\[
\{\sigma\} = \sigma(\ovth) + \sigma(\unth), 
\qquad
\{\sigma_\Sigma\} = \sigma_\Sigma(\ovth_\Sigma) + \sigma_\Sigma(\unth_\Sigma),
\]
and
\[
\{\sigma \kappa_\Sigma\}
= \sigma(\ovth)\kappa_\Sigma(\unth_\Sigma)
+ \sigma(\unth)\kappa_\Sigma(\ovth_\Sigma).
\]
Note that this notation should not be confused with the notation we will use for the \emph{difference} of a function evaluated at the boundary points, $[f]^{\ovth_\Sigma}_{\unth_\Sigma}=f(\ovth_\Sigma)-f(\unth_\Sigma)$ or $[f]^{\ovth}_{\unth} =(\ovth)-f(\unth)$, which typically arises from integrating a derivative.

We also introduce the translated position vector
\[
X_p := X - p\, e_1,
\]
and define the corresponding support functions by
\[
\sigma^p := X_p \cdot \nor, \qquad \sigma_\Sigma^p := X_p \cdot \nor_\Sigma .
\]
Finally we introduce the difference of boundary angles function 
\[
\Theta(t)=\ovth-\unth\,
\]
as well as the scale function
\[
\Lambda(t)=\frac{1}{\sqrt{2(T-t)}}\,.
\]

In terms of asymptotics, we will use the standard $O-o$ notation for quantities defined on the flow $\{\Gamma_t\}$. In particular, we will use  $O(\Lambda^k)$, $k\in \Z$, to mean that a quantity divided by $\Lambda^K$ is bounded in absolute value everywhere and for all times in $[0,T)$ and $o(\Lambda)$ to mean that moreover the bound tends to zero as $t\to T$.

\subsection{Evolution of the enclosed area functional}
We define the following quantity which is an approximation of $\int_{\unth}^{\ovth} (\sigma-1)d\theta$ for the rescaled flow, as seen by \eqref{eq:unstable_node1}.

\begin{equation}\label{U-simple}
U(t):=2\l^2(t)|U_{(t)}|-\Theta(t)-\l^2(t)\int^{\ovth_\Sigma}_{\unth_\Sigma}\frac{\sigma_\Sigma}{\kappa_\Sigma}\,d\theta-\Lambda(t)\{\sigma_\Sigma\}\,,
\end{equation}
where with $U_{(t)}$ we denote the enclosed area, as defined in section \ref{areasection}, of the rescaled flow at time $t$. Note that this does not depend on the translation by $p\cdot e_1$.

\begin{lemma} \label{U-evolution}
The evolution of $U$, defined in \eqref{U-simple}, is given by
\begin{equation}\label{dUdt}
\begin{split}
\frac{d}{dt}U(t)=
&\frac{\l_t}{\l}\left(2U(t)+2\Theta(t)+2\Lambda(t)\{\sigma_\Sigma\}\right)\\
&-\l^2(2(\ovth-\unth)-\{\sigma_\Sigma\kappa\})+\Lambda(t)\{\sigma\kappa\kappa_\Sigma\}-\{\kappa_\Sigma\kappa\}-{\Lambda^3}\{\sigma_\Sigma\}\,.
\end{split}
\end{equation}
\end{lemma}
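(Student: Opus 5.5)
The plan is to differentiate each of the four terms in \eqref{U-simple} separately and then regroup. Throughout, $|U_{(t)}|$ is the area enclosed by the unrescaled curve $\Gamma_t$ and $\Sigma$. All the factors $\l^2$ are then explicit, and the horizontal translation by $p\,e_1$ plays no role.

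\emph{The two terms carrying $\l^2$.} By the product rule,
\[
\frac{d}{dt}\Big(2\l^2|U_{(t)}| - \l^2\!\int_{\unth_\Sigma}^{\ovth_\Sigma}\tfrac{\sigma_\Sigma}{\kappa_\Sigma}\,d\theta\Big)
= \frac{2\l_t}{\l}\Big(2\l^2|U_{(t)}| - \l^2\!\int\tfrac{\sigma_\Sigma}{\kappa_\Sigma}\Big) + \l^2\Big(2\tfrac{d}{dt}|U_{(t)}| - \tfrac{d}{dt}\!\int\tfrac{\sigma_\Sigma}{\kappa_\Sigma}\Big).
\]
By the definition \eqref{U-simple}, the first bracket equals $U+\Theta+\Lambda\{\sigma_\Sigma\}$. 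This produces exactly the $\frac{\l_t}{\l}$ term in \eqref{dUdt}.

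For the area, I will use the first variation formula with normal speed $-\kappa$. Only $\Gamma_t$ moves with nonzero normal velocity; the boundary portion merely slides along $\Sigma$, which contributes nothing to first order. Hence
\[
\tfrac{d}{dt}|U_{(t)}| = -\int_{\Gamma_t}\kappa\,ds = -(\ovth-\unth),
\]
which gives $-2\l^2\Theta$.

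For the integral term, the integrand is time-independent and only the endpoints move. By \eqref{thetaS} and \eqref{thetaev},
\[
\tfrac{d}{dt}\unth_\Sigma=\kappa_\Sigma(\unth_\Sigma)\kappa(\ovth), \qquad \tfrac{d}{dt}\ovth_\Sigma=-\kappa_\Sigma(\ovth_\Sigma)\kappa(\unth).
\]
The factors $\kappa_\Sigma$ cancel against $1/\kappa_\Sigma$, so
\[
\tfrac{d}{dt}\int\tfrac{\sigma_\Sigma}{\kappa_\Sigma}=-\sigma_\Sigma(\ovth_\Sigma)\kappa(\unth)-\sigma_\Sigma(\unth_\Sigma)\kappa(\ovth)=-\{\sigma_\Sigma\kappa\}.
\]
Subtracting this gives $+\l^2\{\sigma_\Sigma\kappa\}$.

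\emph{The remaining two terms.} Directly from \eqref{thetaev}, $\Theta'=\{\kappa_\Sigma\kappa\}$, which contributes $-\{\kappa_\Sigma\kappa\}$. Since $\Lambda=(2(T-t))^{-1/2}$, we have $\Lambda'=\Lambda^3$, giving $-\Lambda^3\{\sigma_\Sigma\}$. It remains to compute $-\Lambda\,\frac{d}{dt}\{\sigma_\Sigma\}$, which by the chain rule is $-\Lambda\,\{\partial_\theta\sigma_\Sigma\cdot\frac{d\theta_\Sigma}{dt}\}$.

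At a boundary point $X$, we have $\partial_\theta\sigma_\Sigma = X\cdot\tang_\Sigma$. The free boundary condition gives $\tang_\Sigma=\pm\nor$ there, so $\partial_\theta\sigma_\Sigma=\pm\sigma$. This is the same content as the boundary conditions in \eqref{eq:support_pde}, read in reverse. Combining with the endpoint speeds above should yield $+\Lambda\{\sigma\kappa\kappa_\Sigma\}$.

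\emph{Main obstacle.} The only delicate step is the sign and pairing bookkeeping at the two endpoints. One must check that $\ovth$ pairs with $\unth_\Sigma$ and $\unth$ with $\ovth_\Sigma$, in accordance with \eqref{thetaS}. One must also check that the sign in $\tang_\Sigma=\pm\nor$ at each endpoint combines with the opposite signs of the two angle speeds so that both endpoint contributions enter with the same sign. I would verify this on the model case of the half-disc in the upper half-plane: there $\unth=0$, $\ovth=\pi$, the tangents of $\Sigma$ at $(\pm1,0)$ are $\pm e_1$, and these coincide with the normals of $\Gamma$.
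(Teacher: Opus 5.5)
Your proposal is correct. It has the same skeleton as the paper's proof: differentiate the four terms of \eqref{U-simple} separately, then regroup the $\l_t/\l$ terms using the definition of $U$. Your reading of $|U_{(t)}|$ as the unrescaled area is also the one the paper's computation actually uses.

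The one sub-step where you take a genuinely different route is $\frac{d}{dt}|U_{(t)}|=-\Theta$. You get it from the geometric first variation of area: only $\Gamma_t$ has nonzero normal velocity $-\kappa$, the $\Sigma$-portion of $\partial U_{(t)}$ changes only by its endpoints sliding tangentially, and $\int_{\Gamma_t}\kappa\,ds=\Theta$. The paper instead writes $2|U_{(t)}|=\int_{\unth}^{\ovth}\sigma/\kappa\,d\theta+\int_{\unth_\Sigma}^{\ovth_\Sigma}\sigma_\Sigma/\kappa_\Sigma\,d\theta$ and differentiates with moving endpoints. It then uses $\partial_t(\sigma/\kappa)=-1-\sigma(\kappa_{\theta\theta}+\kappa)$, integrates by parts, and asserts that the boundary terms cancel. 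That cancellation silently requires $\kappa_\theta(\ovth)=\kappa_\Sigma(\unth_\Sigma)$ and $\kappa_\theta(\unth)=-\kappa_\Sigma(\ovth_\Sigma)$, which come from differentiating the Neumann conditions in \eqref{eq:support_pde} in time. Your route is shorter and avoids this hidden boundary computation. The paper's route stays within the Gauss-map/support-function calculus, which it reuses for the center-of-mass evolution.

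The step you leave as ``should yield'' does close, but your proposed model check is misstated. With $\tang_\Sigma(\theta)=(-\sin\theta,\cos\theta)$, the oriented tangent of $\Sigma$ is approximately $e_1$ at \emph{both} endpoints; it is the normal $\nor$ of $\Gamma$ that flips between $e_1$ and $-e_1$. Read literally, ``$\tang_\Sigma=\pm e_1$, coinciding with the normals of $\Gamma$'' gives $\tang_\Sigma=\nor$ at both ends. The two endpoint contributions would then have opposite signs, and you would get the wrong term. The flat model is also degenerate for the Gauss map of $\Sigma$, since there $\kappa_\Sigma\equiv 0$ and $\unth_\Sigma=\ovth_\Sigma$.

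It is cleaner to compute directly from \eqref{thetaS}:
\begin{itemize}
\item $\tang_\Sigma(\ovth_\Sigma)=\tang_\Sigma(\tfrac{3\pi}{2}+\unth)=\nor(\unth)$ and $\tang_\Sigma(\unth_\Sigma)=\tang_\Sigma(\tfrac{\pi}{2}+\ovth)=-\nor(\ovth)$.
\item Hence $\partial_\theta\sigma_\Sigma(\ovth_\Sigma)=\sigma(\unth)$ and $\partial_\theta\sigma_\Sigma(\unth_\Sigma)=-\sigma(\ovth)$.
\item The endpoint speeds from \eqref{thetaev} are $\ovth_\Sigma'=\unth'=-\kappa_\Sigma(\ovth_\Sigma)\kappa(\unth)$ and $\unth_\Sigma'=\ovth'=\kappa_\Sigma(\unth_\Sigma)\kappa(\ovth)$.
\end{itemize}
Combining these gives $\frac{d}{dt}\{\sigma_\Sigma\}=-\{\sigma\kappa\kappa_\Sigma\}$, and hence the term $+\Lambda\{\sigma\kappa\kappa_\Sigma\}$ in \eqref{dUdt}, as you claimed.
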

\begin{proof}
We compute the evolution of each of the terms in the definition of $U$ separately.
Recall first \eqref{thetaev}, which gives
\begin{equation*}
\Theta'(t)= \kappa_{\Sigma}(\underline\theta_\Sigma)\kappa(\overline\theta,t) +\kappa_{\Sigma}(\overline\theta_\Sigma)\kappa(\underline\theta,t)=\{\kappa_\Sigma\kappa\}\,.
\end{equation*}

The terms involving $\Sigma$ are affected only by the evolution of the endpoints, therefore

\[
\frac{d}{dt}\int^{\ovth_\Sigma}_{\unth_\Sigma}\frac{\sigma_\Sigma}{\kappa_\Sigma}\,d\theta=-\{\sigma_\Sigma\kappa\}\,,
\]
and
\[
\frac{d}{dt}\{\sigma_\Sigma\}=-\{\sigma\kappa\kappa_\Sigma\}\,.
\]

For the enclosed area, we have
\[
\begin{split}
\frac{d}{dt}2|U_{(t)}|=&\frac{d}{dt} \int_{\unth}^{\ovth}\frac\sigma\kappa\,d\theta+\frac{d}{dt}\int_{\unth_\Sigma}^{\ovth_\Sigma}\frac{\sigma_\Sigma}{\kappa_\Sigma}\,d\theta\\
=&
\{\kappa_{\Sigma}\sigma\}-\{\sigma_{\Sigma}\kappa\}-\int_{\unth}^{\ovth} 1+\sigma(\kappa_{\theta\theta}+\kappa)\,d\theta\,.
\end{split}
\]
The last integral on the right hand side can be estimated as
\[
\begin{split}
\int_{\unth}^{\ovth} 1+\sigma(\kappa_{\theta\theta}+\kappa)
&=[\sigma\kappa_\theta]_{\unth}^{\ovth}-[\sigma_\theta\kappa]_{\unth}^{\ovth}+\int_{\unth}^{\ovth} 1+(\sigma+\sigma_{\theta\theta})\kappa\,d\theta\\
&=[\sigma\kappa_\theta]_{\unth}^{\ovth}-[\sigma_\theta\kappa]_{\unth}^{\ovth}+\int_{\unth}^{\ovth}2\,d\theta\,.
\end{split}
\]
Recalling \eqref{eq:support_pde}, we note that the boundary terms cancel and  we obtain
\begin{equation*}
\frac{d}{dt}|U_{(t)}|=-(\ovth-\unth)=-\Theta(t)\,.
\end{equation*}

Therefore, putting everything together, we have
\begin{align*}
\frac{d}{dt}U(t)=&4\l\l_t|U_{(t)}|
-2\l\l_t\int^{\ovth_\Sigma}_{\unth_\Sigma}\frac{\sigma_\Sigma}{\kappa_\Sigma}\,d\theta
-{\Lambda_t}\{\sigma_\Sigma\}
\nonumber\\
&-\l^2(2(\ovth-\unth)-\{\sigma_{\Sigma}\kappa\})+\Lambda\{\sigma\kappa\kappa_\Sigma\}-\{\kappa_{\Sigma}\kappa\}\nonumber\\
=&\frac{\l_t}{\l}\left(2U(t)+2\Theta+2\Lambda\{\sigma_\Sigma\}\right)\nonumber\\
&-{\Lambda^3}\{\sigma_\Sigma\}-\l^2(2(\ovth-\unth)-\{\sigma_\Sigma\kappa\})+\Lambda\{\sigma\kappa\kappa_\Sigma\}-\{\kappa_\Sigma\kappa\}\,.
\end{align*}
\end{proof}

\subsection{Evolution of the center of mass functional}
We define the following quantity which is an approximation of $\int_{\unth}^{\ovth} (\sigma-1)\cos\theta d\theta$ for the rescaled flow, as seen by \eqref{eq:unstable_node2}.
\begin{equation}\label{q-simple}
q(t)=
\l^3\left(3q(U^p_{(t)})
-\int^{\ovth_\Sigma}_{\unth_\Sigma}(X\cdot e_1)\frac{\sigma_\Sigma}{\kappa_\Sigma}\,d\theta - \frac{1}{\Lambda(t)^2}[\gamma\cdot e_2]^{\ovth}_{\unth}\right)\,,
\end{equation}
where $q(U^p_{(t)})$ is the center of mass, as defined in \eqref{CMfunctional}, for the rescaled flow, at time $t$.
\begin{lemma}\label{q-evolution}
The evolution of $q$, defined in \eqref{q-simple}, is given by

\begin{equation}\label{dqdt}
\begin{aligned}
\frac{d}{dt} q(t) 
&= 3\,\frac{\lambda_t}{\lambda}\, q(t) \\
&\quad + \lambda^3 \Biggl[
    - \int_{\underline{\theta}}^{\overline{\theta}} \bigl( X \cdot e_1 - p \bigr)\, d\theta
    - p_t\, |U_{(t)}| \\
&\qquad\qquad + \bigl\{ (X \cdot e_1) \sigma_\Sigma \kappa \bigr\}
    + \frac{1}{\Lambda^2} \bigl[ \kappa \sin_\Gamma \bigr]_{\underline{\theta}}^{\overline{\theta}}
    + 2 \bigl[ \gamma \cdot e_2 \bigr]_{\underline{\theta}}^{\overline{\theta}}
\Biggr].
\end{aligned}
\end{equation}

\end{lemma}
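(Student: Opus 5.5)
Following the proof of Lemma~\ref{U-evolution}, I would write $q(t)=\l^3 Q(t)$, where $Q$ is the bracketed quantity in \eqref{q-simple}. Then
\[
\frac{d}{dt}q=3\frac{\l_t}{\l}q+\l^3\frac{dQ}{dt},
\]
so it remains to compute $\frac{d}{dt}Q$ term by term and check that it equals the bracket in \eqref{dqdt}. The three pieces are: $3q(U^p_{(t)})$; the barrier integral $\int_{\unth_\Sigma}^{\ovth_\Sigma}(X\cdot e_1)\frac{\sigma_\Sigma}{\kappa_\Sigma}d\theta$; and the boundary term $\Lambda^{-2}[\gamma\cdot e_2]^{\ovth}_{\unth}=2(T-t)[\gamma\cdot e_2]^{\ovth}_{\unth}$.

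\textbf{The barrier integral.} Since $\Sigma$ is fixed, its integrand does not depend on $t$, and only the endpoints $\unth_\Sigma,\ovth_\Sigma$ move. By \eqref{thetaS} and \eqref{thetaev}, exactly as in the proof of Lemma~\ref{U-evolution}, its derivative is $-\{(X\cdot e_1)\sigma_\Sigma\kappa\}$. Because this integral enters $Q$ with a minus sign, it produces the term $+\{(X\cdot e_1)\sigma_\Sigma\kappa\}$.

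\textbf{The center of mass of $U^p$.} I would use the divergence-theorem formula from Section~\ref{CMfunctional}, written for $X_p$. Note that $\frac{d}{dt}\int_{U}(X\cdot e_1-p)\,dX$ splits into two parts: $-p_t|U|$, coming from the translation, and the contribution from the motion of the free boundary $\Gamma_t$. Along $\Gamma_t$ the normal speed is $-\kappa$ and $ds=d\theta/\kappa$, so the free-boundary contribution is $-\int_{\unth}^{\ovth}(X\cdot e_1-p)\,d\theta$. The barrier part of $\partial U$ does not move.

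There is a subtlety in how the normalization is written. In $3q(U^p)=\nu(\sigma^p)+\nu_\Sigma$, the $\Sigma$-integral changes through its endpoints. I expect the factor $3$ together with the $\Sigma$-part of $q$ to be arranged so that the endpoint terms either cancel against or combine with the barrier-integral derivative, leaving a single $\{(X\cdot e_1)\sigma_\Sigma\kappa\}$. The cleanest route is to differentiate $\int_U X_p\cdot e_1\,dX$ directly by the Reynolds transport theorem, rather than differentiating $\nu$ in Gauss-map coordinates. This avoids having to handle the moving $\theta$-interval and the PDE \eqref{eq:support_pde} inside $\nu$.

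\textbf{The boundary term.} Differentiating $2(T-t)[\gamma\cdot e_2]^{\ovth}_{\unth}$ produces two contributions.

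First, differentiating the prefactor gives $-2[\gamma\cdot e_2]$. This enters $Q$ with a minus sign, so it yields the term $+2[\gamma\cdot e_2]^{\ovth}_{\unth}$.

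Second, the endpoint $\gamma(\ovth(t),t)$ is the boundary point of $\Gamma_t$, which moves along $\Sigma$. Its velocity is the sum of the normal velocity $-\kappa\nor$ and a tangential component. That tangential component is fixed by the condition that the point stays on $\Sigma$ with $\nor\perp\nor_\Sigma$, i.e.\ $\tang=\pm\nor_\Sigma$. Taking the $e_2$ component should give $\kappa\sin\theta$ up to the endpoint-dependent orientation, which I read as the paper's $\sin_\Gamma$. The result is the term $\Lambda^{-2}[\kappa\sin_\Gamma]^{\ovth}_{\unth}$.

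\textbf{Main obstacle.} The delicate step is this sign and orientation bookkeeping at the two endpoints: which of $\pm\sin$ appears at $\ovth$ versus $\unth$, and making sure the endpoint contributions from $3q$ and from the barrier integral combine into exactly one $\{(X\cdot e_1)\sigma_\Sigma\kappa\}$ and are not double counted. I would check all of these signs on the exact semicircle, where $\sigma=1$, $\Sigma$ is a line, and $p=0$.
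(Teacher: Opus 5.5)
Your decomposition ($q=\l^3Q$, the barrier integral through the motion of its endpoints via \eqref{thetaS} and \eqref{thetaev}, and the product rule on $2(T-t)[\gamma\cdot e_2]^{\ovth}_{\unth}$) is the same as the paper's. The genuine difference is the center-of-mass term.

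\textbf{Comparison of routes.} The paper differentiates the Gauss-map representation of $3q(U^p_{(t)})$ under the integral sign, using $\partial_t\gamma=-\kappa\nor-\kappa_\theta\tang$ and $\partial_t(1/\kappa)=-(\kappa_{\theta\theta}+\kappa)$. It then integrates by parts twice and simplifies with $\sigma^p+\sigma^p_{\theta\theta}=1/\kappa$ and $\sigma^p_\theta\sin\theta-\sigma^p\cos\theta=-X_p\cdot e_1$. It gathers the $p_t$-terms into $\int_U\mathrm{div}(X_p+(X_p\cdot e_1)e_1)$, and it relies on the endpoint terms cancelling by the free boundary condition. Your Reynolds transport argument reaches the same interior terms in one line and makes that cancellation automatic. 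Since $\Sigma$ is stationary and the corners slide along it, only $\Gamma_t$ contributes, with outward normal velocity $-\kappa$ and $ds=d\theta/\kappa$. This is a real simplification; the paper's route only buys consistency with the Gauss-map formalism used elsewhere.

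\textbf{Two corrections.} First, your ``subtlety'' paragraph is mistaken. In your computation $3q(U^p_{(t)})$ produces no endpoint terms at all, the subtracted barrier integral in \eqref{q-simple} is by itself the source of $\{(X\cdot e_1)\sigma_\Sigma\kappa\}$, and nothing absorbs the factor $3$. What your argument actually proves is
\[
\frac{d}{dt}\bigl(3q(U^p_{(t)})\bigr)=-3\int_{\unth}^{\ovth}(X\cdot e_1-p)\,d\theta-3p_t|U_{(t)}|,
\]
i.e.\ \eqref{dqdt} with a $3$ in front of its first two bracketed terms. That is the correct formula. The paper's \eqref{q4} is the derivative of $q(U^p_{(t)})$ itself, and the factor $3$ is dropped when the pieces are assembled. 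As a check, take a semicircle of radius $r=\sqrt{2(T-t)}$ centred at $(c,0)$. There $q(U^p)=\tfrac{\pi}{2}r^2(c-p)$ has derivative $-\pi(c-p)-p_t|U|$, which equals $-\int_0^\pi(X\cdot e_1-p)\,d\theta-p_t|U|$. Since the factor only rescales the ODE for $p$ in Lemma \ref{qt=0}, nothing downstream is affected, but you should state it rather than expect it to cancel.

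Second, the tangential part of the endpoint velocity is not merely ``fixed''; it is zero. The endpoint stays on $\Sigma$, so its velocity is orthogonal to $\nor_\Sigma=\pm\tang$, hence equals its normal part $-\kappa\nor$. This gives $\frac{d}{dt}[\gamma\cdot e_2]^{\ovth}_{\unth}=-[\kappa\sin\theta]^{\ovth}_{\unth}$ exactly, with no orientation ambiguity; $\sin_\Gamma$ just means $\sin$ of the $\Gamma$-angle.
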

\begin{proof}
We compute the evolution of each term in the definition of $q$ separately. We first compute the evolution of the terms that involve only $\Sigma$, which only depends on the motion of the endpoints.
\[
\begin{split}
\frac{d}{dt}\int^{\ovth_\Sigma}_{\unth_\Sigma}(X\cdot e_1)\frac{\sigma_\Sigma}{\kappa_\Sigma}\,d\theta=&-\{(X \cdot e_{1})\sigma_{\Sigma}\kappa\}\,,
\end{split}
\]
and
\[
\frac{d}{dt}[\gamma\cdot e_2]^{\ovth}_{\unth}=-[\kappa\sin]^{\ovth}_{\unth}.
\]

To compute the evolution of $q(U^p_{(t)})$ we proceed as follows.
\[
\begin{split}
\frac{d}{dt}q(U^p_{(t)})&=\frac{d}{dt} \int_{\unth}^{\ovth}(X_p\cdot e_1)\frac{\sigma^p}{\kappa}\,d\theta+\frac{d}{dt}\int_{\unth_\Sigma}^{\ovth_\Sigma}(X_p\cdot e_1)\frac{\sigma^p_\Sigma}{\kappa_\Sigma}\,d\theta\\
&=\{ X_p\cdot e_1\sigma^p\kappa_\Sigma\}+\{ X_p\cdot e_1\sigma^p_\Sigma\kappa\}\\
&-\int_{\unth}^{\ovth}(X_p\cdot e_1)( 1+\sigma^p(\kappa_{\theta\theta}+\kappa))+\frac{\sigma^p}{\kappa}(\kappa\cos\theta-\kappa_\theta \sin\theta)\,d\theta\,\\
&-\int_{\unth}^{\ovth} \frac{p_t}{\kappa}(\sigma^p+X_p\cdot e_1\cos\theta)\,d\theta-\int_{\unth_\Sigma}^{\ovth_\Sigma} \frac{p_t}{\kappa_\Sigma}(\sigma^p_\Sigma+X_p\cdot e_1\cos\theta)\,d\theta\,.
\end{split}
\]
Observe that
\[
\begin{split}
\int_{\unth}^{\ovth}&(X_p\cdot e_1)( 1+\sigma^p(\kappa_{\theta\theta}+\kappa))\,d\theta
\\
=&[(X_p\cdot e_1)\sigma^p\kappa_\theta]_{\unth}^{\ovth}-[(X_p\cdot e_1)\sigma^p_\theta\kappa]_{\unth}^{\ovth}\\
&+\int_{\unth}^{\ovth}(X_p\cdot e_1) (1+(\sigma^p+\sigma^p_{\theta\theta})\kappa)\,d\theta-\int_{\unth}^{\ovth}(X_p\cdot e_1)_\theta (\sigma^p\kappa_\theta-\sigma^p_\theta\kappa)\,d\theta\\
=&[(X_p\cdot e_1)\sigma^p\kappa_\theta]_{\unth}^{\ovth}-[(X_p\cdot e_1)\sigma^p_\theta\kappa]_{\unth}^{\ovth}\\
&+\int_{\unth}^{\ovth}(X_p\cdot e_1) (1+(\sigma^p+\sigma^p_{\theta\theta})\kappa)\,d\theta+\int_{\unth}^{\ovth}\frac{\sin\theta}{\kappa} (\sigma^p\kappa_\theta-\sigma^p_\theta\kappa)\,d\theta\,,
\end{split}
\]
and since
\[
\sigma^p+\sigma^p_{\theta\theta}=\sigma+\sigma_{\theta\theta}=\frac{1}{\kappa}
\]
and 
\[
\sigma^p_\theta\sin\theta-\sigma^p\cos\theta=-X\cdot e_1+p\,,
\]
we obtain
\begin{equation}\label{q4}
\begin{split}
\frac{d}{dt}q(U^p_{(t)})&=-\int_{\unth}^{\ovth}X_p\cdot e_1\,d\theta-\frac13 p_t\int_U\text{div}(X_p+(X_p\cdot e_1) e_1)\\
&=-\int_{\unth}^{\ovth}X_p\cdot e_1\,d\theta-p_t|U_{(t)}|\,.
\end{split}
\end{equation}

Putting everything together, and noting that $\Lambda_t=\Lambda^3$ we obtain \eqref{dqdt}.
\end{proof}

\section{Dynamic stability}

The linear analysis (recall subsection \ref{sub-lenearstability}) suggests that the shrinking semicircle is dynamically stable after modding out time translations and horizontal spatial translations. To this end we will show that for the rescaling of the flow, \eqref{rescaled flow},
and for appropriately chosen functions \(\lambda\) and \(p\), the support function \(\tilde{\sigma}\) of \(\tilde{\Gamma}\) has projections onto the unstable modes that satisfy decay estimates sufficient to yield `fast' convergence to the round point. Roughly speaking, this normalization implies that, for the rescaled flow, the quantities
\[
\int_{\unth}^{\ovth} (\tsigma - 1)\, d\theta,
\qquad
\int_{\unth}^{\ovth} (\tsigma - 1)\cos\theta\, d\theta
\]
are both of order \(e^{-2\ttime}+O(|\sigma - 1|^2_{C^2})\).
We will do this in two steps, first obtaining the weaker bound \(e^{-\ttime}+O(|\sigma - 1|^2_{C^2})\), and then using this to refine our estimates and obtain the optimal bound.

\begin{lemma}\label{Ut=0}
There exists a scaling function $\l:[0, T)\to \R$ such that $U(t)=0$, for all $t\in [0, T) $, with $U$ as defined in \eqref{U-simple},  Moreover this $\l$ satisfies
\[
\l(t)-\Lambda(t)=O(1)
\]
and 
\[
\frac{\l_t}{\l^3}=1+O(\Lambda^{-1})\,.
\]
\end{lemma}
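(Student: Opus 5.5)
The plan is to solve the equation $U(t)=0$ \emph{algebraically} for $\l$ rather than as an ODE. This works because $\l$ enters \eqref{U-simple} only through the factor $\l^2$. Write
\[
A(t):=2|U_{(t)}|-\int_{\unth_\Sigma}^{\ovth_\Sigma}\frac{\sigma_\Sigma}{\kappa_\Sigma}\,d\theta=\int_{\unth}^{\ovth}\frac{\sigma}{\kappa}\,d\theta,\qquad N(t):=\Theta(t)+\Lambda(t)\{\sigma_\Sigma\},
\]
where the areas are those of the unrescaled flow, as they appear in the proof of Lemma \ref{U-evolution}. Then $U(t)=\l^2A-N$, so we define $\l:=\sqrt{N/A}$. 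This is legitimate, and $C^1$ in $t$, as soon as $A,N>0$. Both $A$ and $N$ are $C^1$, because the endpoint angles evolve by \eqref{thetaev} and the area evolves by the computation in Lemma \ref{U-evolution}.

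\textbf{Step 1: asymptotics of $N$.} By \eqref{eq:difference_in_angle}, $\Theta=\pi+O(\Lambda^{-1})$. The boundary $\Sigma$ passes through the origin with normal $-e_2$ there and is $C^2$. The endpoints of $\Gamma_t$ lie at distance $O(\Lambda^{-1})$ from the origin, by \eqref{1-conv}. Hence $\sigma_\Sigma=X\cdot\nor_\Sigma=O(|X|^2)=O(\Lambda^{-2})$ at the endpoints. Therefore $\Lambda\{\sigma_\Sigma\}=O(\Lambda^{-1})$, and $N=\pi+O(\Lambda^{-1})$.

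\textbf{Step 2: asymptotics of $A$.} This is the step I expect to be the main obstacle. The smooth convergence \eqref{1-conv} only gives $2|U_{(t)}|=\pi\Lambda^{-2}(1+o(1))$, with no rate, so a direct estimate is not enough. Instead I would integrate the exact identity $\frac{d}{dt}|U_{(t)}|=-\Theta$ obtained in the proof of Lemma \ref{U-evolution}. Using $|U_{(t)}|\to0$ as $t\to T$ and \eqref{eq:difference_in_angle}, this gives
\[
2|U_{(t)}|=2\int_t^T\Theta\,ds=2\pi(T-t)+O((T-t)^{3/2})=\pi\Lambda^{-2}\bigl(1+O(\Lambda^{-1})\bigr).
\]
The $\Sigma$-integral is $\int_\Sigma X\cdot\nor_\Sigma\,ds$ over an arc of length $O(\Lambda^{-1})$ on which $X\cdot\nor_\Sigma=O(\Lambda^{-2})$, so it is $O(\Lambda^{-3})$. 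Hence $A=\pi\Lambda^{-2}(1+O(\Lambda^{-1}))>0$ for $t\ge0$; if necessary we first shift the initial time, as allowed in the setup. Combining the two steps, $\l^2=N/A=\Lambda^2(1+O(\Lambda^{-1}))$, which gives $\l=\Lambda+O(1)$.

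\textbf{Step 3: the derivative estimate.} Since $U\equiv0$, also $\frac{d}{dt}U\equiv0$, and \eqref{dUdt} yields
\[
\frac{\l_t}{\l}\bigl(2\Theta+2\Lambda\{\sigma_\Sigma\}\bigr)=2\l^2\Theta-\l^2\{\sigma_\Sigma\kappa\}-\Lambda\{\sigma\kappa\kappa_\Sigma\}+\{\kappa_\Sigma\kappa\}+\Lambda^3\{\sigma_\Sigma\}.
\]
We estimate the right-hand side using \eqref{eq:curvature_estimates1}, which gives $\kappa=O(\Lambda)$, together with $\sigma=O(\Lambda^{-1})$ on $\Gamma_t$, $\sigma_\Sigma=O(\Lambda^{-2})$ at the endpoints, bounded $\kappa_\Sigma$, and $\l\sim\Lambda$. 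With these, every term other than $2\l^2\Theta$ is $O(\Lambda)$. On the left, $2\Theta+2\Lambda\{\sigma_\Sigma\}=2\Theta+O(\Lambda^{-1})$ and $\Theta\to\pi$. Dividing through by $2\l^2(\Theta+O(\Lambda^{-1}))$ gives
\[
\frac{\l_t}{\l^3}=1+O(\Lambda^{-1}),
\]
which completes the proof.
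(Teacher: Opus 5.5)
Your proof is correct, and it takes a genuinely different route from the paper. The paper never writes $\l$ down. It takes $\l$ to solve the ODE obtained by cancelling all non-$U$ terms in \eqref{dUdt}, so that $U'=\frac{2\l_t}{\l}U$ forces $\l^{-2}U$ to be constant, hence zero. It then gets $\l-\Lambda=O(1)$ by integrating a differential inequality for $\l^{-2}$ back from $T$, and reads $\l_t/\l^3=1+O(\Lambda^{-1})$ directly off the ODE. You observe instead that $U=\l^2A-N$ is affine in $\l^2$ with $A>0$, so $U\equiv0$ forces $\l=\sqrt{N/A}$. This gives existence and uniqueness at once: the paper's ODE is linear in $\l^{-2}$, and its unique solution vanishing at $T$ is exactly your $A/N$. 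It also spares you the singular ODE with $\l\to\infty$ and the limit argument as $t\to T$. The price is that you need a rate for $A$, which \eqref{1-conv} alone does not give. Integrating the exact identity $\frac{d}{dt}|U_{(t)}|=-\Theta$ against \eqref{eq:difference_in_angle}, as you do, supplies it correctly. Differentiating $U\equiv0$ then reproduces the paper's equation for $\l_t$, so your Step 3 coincides with the paper's estimate. A bonus of the explicit formula is that next-order terms become computable. Let $\kappa_0$ be the curvature of $\Sigma$ at $p$. One gets $\pi-\Theta=2\kappa_0\Lambda^{-1}(1+o(1))$, $\{\sigma_\Sigma\}=\kappa_0\Lambda^{-2}(1+o(1))$, $2|U_{(t)}|=\pi\Lambda^{-2}-\tfrac43\kappa_0\Lambda^{-3}+o(\Lambda^{-3})$ and $\int_{\unth_\Sigma}^{\ovth_\Sigma}\sigma_\Sigma\kappa_\Sigma^{-1}\,d\theta=\tfrac13\kappa_0\Lambda^{-3}(1+o(1))$, hence $\l-\Lambda\to\kappa_0/(3\pi)$. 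So the $O(1)$ in the lemma is sharp when $\kappa_0>0$. This bears on the later improvement \eqref{newl}: in the equation for $\l_t/\l^3$, the contributions $-3\Lambda\{\sigma_\Sigma\}$ and $\l^{-2}\Lambda^3\{\sigma_\Sigma\}$ (each divided by $2\Theta+2\Lambda\{\sigma_\Sigma\}$) add up to $-\tfrac{\kappa_0}{\pi}\Lambda^{-1}+o(\Lambda^{-1})$, however well $E$ and $e$ are controlled.
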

\begin{proof}
Recall the evolution of $U$, given in Lemma \ref{U-evolution}.
In order to find $\lambda$ such that $U(t)$ is identically zero,  we solve
\[
\frac{\l_t}{\l}\left(2\Theta+2\Lambda\{\sigma_\Sigma\}\right)=
\l^2(2\Theta-\{\sigma_\Sigma\kappa\})-\Lambda\{\sigma\kappa\kappa_\Sigma\}+\{\kappa_\Sigma\kappa\}+{\Lambda^3}\{\sigma_\Sigma\}\\
\]
Note that, by the convergence \eqref{1-conv}, we have that
\begin{equation}\label{ks}
\kappa(\theta)=\Lambda+E(\theta)\,\text{ and }\,\,\sigma(\theta)=\frac{1}{\Lambda}+e(\theta)\,,
\end{equation}
where the functions $E, e$ satisfy
\begin{equation}\label{Ee}
E(\theta)=o(\Lambda)\,,\,\,e(\theta)=o(\Lambda^{-1})\,.
\end{equation}
Therefore $\l$ should solve the ODE
\[
\frac{\l_t}{\l}\left(2\Theta+2\Lambda\{\sigma_\Sigma\}\right)=
\l^2(2\Theta-\Lambda\{\sigma_\Sigma\}+\{E\sigma_\Sigma\})-\{\Lambda e\kappa\kappa_\Sigma\} + \Lambda^{3}\{\sigma_{\Sigma}\}\,,
\]
which, after dividing by $\l^2$, becomes
\[
\frac{\l_t}{\l^3}=1+\frac{\{E\sigma_\Sigma\} - 3\Lambda\{\sigma_{\Sigma}\}}{2\Theta+2\Lambda\{\sigma_\Sigma\}}+\frac{1}{\l^2}\frac{\Lambda^{3}\{\sigma_{\Sigma}\} -\{\Lambda e \kappa\kappa_\Sigma\}}{2\Theta+2\Lambda\{\sigma_\Sigma\}}\,.
\]
Next, observe that $\sigma_\Sigma = O(\Lambda^{-2})$, because both 
$\sigma_{\Sigma_t}$ and its derivative $(\sigma_{\Sigma_t})_\theta$ vanish at 
$\theta = \frac{3\pi}{2}$.  
Hence, by Taylor's theorem, together with \eqref{eq:difference_in_angle} and \eqref{thetaS}, we have
\begin{equation}\label{Taylor}
\sigma_{\Sigma_t}(\theta)
= \frac{1}{2} (\theta - \tfrac{3\pi}{2})^2 \, \kappa_{\Sigma_t}^{-1}\!\bigl(\tfrac{3\pi}{2}\bigr)
  + O\bigl((\theta - \tfrac{3\pi}{2})^3\bigr)
= O(\Lambda^{-2})\,.
\end{equation}
Using \eqref{Ee} and \eqref{Taylor},  the ODE can be  simplified as 
\[
 2\frac{\l_t}{\l^3}= 2+O\left(\frac{1}{\Lambda}\right)+\frac{1}{\l^2}O(\Lambda)\,.
\]
Hence, any solution with $\l(t)\to \infty$ as $t\to T$, satisfies
\[
\frac{1}{\l^2}=\frac{1}{\Lambda^2}+O(\Lambda^{-3})\,.
\]
To see this, note that there exists a constant $C$ such that,
\[
 2-\frac{C}{\Lambda}-\frac{C\Lambda}{\l^2}\le 2\frac{\l_t}{\l^3}\le 2+\frac{C}{\Lambda}+\frac{C\Lambda}{\l^2}\,.
\]
This implies
\[
\left(e^{\frac{C }{\Lambda}}\l^{-2}\right)_t\ge e^{\frac{C }{\Lambda}}\left(2-\frac{C}{\Lambda}\right)\,\text{ and }\,\left(e^{-\frac{C  }{\Lambda}}\l^{-2}\right)_t\le e^{-\frac{C  }{\Lambda}}\left(2+\frac{C}{\Lambda}\right)\,.
\]
Integrating these inequalities from $t$ to $T$ then gives the stated estimate.
Rearranging yields the required estimates on $\lambda$.

Finally, we note that with $\lambda$ as above, $U(t)=0$. Indeed, this choice of $\lambda$, together with \eqref{dUdt}, implies that
\[
U'(t)=\frac{2\l_t}{\l} U(t)\,.
\]
Therefore 
\[
\frac{d}{dt}\left(\l^{-2} U(t)\right)=0\,,
\]
and since $\lim_{t\to T}U(t)=0$, the result follows.
\end{proof}

\begin{lemma}\label{qt=0}
Consider $q$ be as defined in \eqref{q-simple}, where $\l$ is such that $\l^{-1}$ is bounded. Then, 
there exists a translation function $p:[0, T)\to \R$, independent of $\l$, such that $q(t)=0$ for all $t\in [0,T)$. Moreover, $p$ satisfies
\[
p=O(\Lambda^{-1})\,\,\text{ and }\,\,
p_t=O\left(\Lambda\right)\,.
\]
\end{lemma}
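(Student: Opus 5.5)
Following the proof of Lemma \ref{Ut=0}, we use the evolution equation of Lemma \ref{q-evolution} and choose $p$ so that the bracket in \eqref{dqdt} vanishes identically. Then $q' = 3(\l_t/\l) q$, so $(\l^{-3}q)'=0$. Once we know $\l^{-3}q\to 0$ as $t\to T$, this forces $q\equiv 0$.

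The first point is that the bracket is affine in $p$, which makes the choice of $p$ explicit. Indeed $\int_{\unth}^{\ovth}(X\cdot e_1-p)\,d\theta = \int_{\unth}^{\ovth} X\cdot e_1\,d\theta - p\,\Theta$, and the remaining terms in the bracket do not involve $p$ except through $p_t|U_{(t)}|$. Setting the bracket to zero therefore gives the linear ODE
\[
p_t\,|U_{(t)}| - \Theta\, p = F(t),\qquad F:=-\int_{\unth}^{\ovth}X\cdot e_1\,d\theta+\{(X\cdot e_1)\sigma_\Sigma\kappa\}+\Lambda^{-2}[\kappa\sin]_{\unth}^{\ovth}+2[\gamma\cdot e_2]_{\unth}^{\ovth}.
\]
Here $|U_{(t)}|$ is the unrescaled area. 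By \eqref{1-conv}, $|U_{(t)}|=\tfrac{\pi}{2}\Lambda^{-2}(1+o(1))$, and $\Theta\to\pi$ by \eqref{eq:difference_in_angle}. The ODE thus reads $p_t = a(t)p + b(t)$ with $a=\Theta/|U|\approx 2\Lambda^2$ and $b=F/|U|$. Neither coefficient involves $\l$, which is why $p$ is independent of $\l$.

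Next we estimate $F$.
\begin{itemize}
\item By \eqref{1-conv}, $X\cdot e_1=\Lambda^{-1}\cos\theta+o(\Lambda^{-1})$, so $\int X\cdot e_1\,d\theta = o(\Lambda^{-1})$ to leading order (since $\int_0^\pi\cos=0$, up to endpoint errors of size $O(\Lambda^{-1})\cdot O(\Lambda^{-1})$). In any case the crude bound $O(\Lambda^{-1})$ suffices.
\item $\{(X\cdot e_1)\sigma_\Sigma\kappa\} = O(\Lambda^{-1}\Lambda^{-2}\Lambda)=O(\Lambda^{-2})$ by \eqref{Taylor} and \eqref{eq:curvature_estimates1}.
\item $\Lambda^{-2}[\kappa\sin] = O(\Lambda^{-2}\cdot\Lambda\cdot\Lambda^{-1})$, since $\sin$ at the endpoint angles is $O(\Lambda^{-1})$ by \eqref{eq:difference_in_angle}.
\item $[\gamma\cdot e_2]=O(\Lambda^{-2})$, since the endpoints lie on $\Sigma$, which is tangent to the $x$-axis at the origin with $|X|=O(\Lambda^{-1})$.
\end{itemize}
Hence $F=O(\Lambda^{-1})$ and $b=O(\Lambda)$.

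The main obstacle is solving the ODE with a bounded solution, since $a>0$ makes the equation unstable forward in time. As in Lemma \ref{Ut=0}, we select the unique solution determined from the terminal time:
\[
p(t) = -\int_t^T e^{-\int_t^s a}\, b(s)\,ds.
\]
Using $a\ge c\Lambda^2$ and the substitution $\Lambda^{-2}=2(T-t)$, the kernel $e^{-\int_t^s a}$ has effective width $\sim\Lambda^{-2}$. More carefully, $\int_t^s a \ge c\log\bigl((T-t)/(T-s)\bigr)$, giving a power weight $((T-s)/(T-t))^{c}$ with $c\approx 1$. Integrating $|b|\lesssim (T-s)^{-1/2}$ against this weight gives $|p(t)|\lesssim \int_t^T \bigl((T-s)/(T-t)\bigr)^{c}(T-s)^{-1/2}ds \lesssim (T-t)^{1/2}=O(\Lambda^{-1})$. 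The integral converges at $s=T$, so $p$ is well defined and $p\to0$. Substituting back, $p_t = ap+b = O(\Lambda^2\Lambda^{-1})+O(\Lambda)=O(\Lambda)$.

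Finally, with this $p$ we have $(\l^{-3}q)'=0$. By \eqref{q-simple}, $\l^{-3}q$ is a combination of unrescaled quantities: $3q(U^p)=O(\Lambda^{-2}\cdot\Lambda^{-1})$, the $\Sigma$ integral is $O(\Lambda^{-1}\cdot\Lambda^{-2})$, and $\Lambda^{-2}[\gamma\cdot e_2]=O(\Lambda^{-4})$. All of these tend to $0$ as $t\to T$, and $\l^{-1}$ is bounded, so $\l^{-3}q\equiv0$ and hence $q\equiv 0$.
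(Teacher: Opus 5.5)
Your proposal is correct and follows the paper's proof: you choose $p$ to annihilate the bracket in \eqref{dqdt}, solve the resulting linear ODE from the terminal time, and conclude from $(\l^{-3}q)_t=0$. The only difference is cosmetic. Since $\frac{d}{dt}|U_{(t)}|=-\Theta$ (Lemma \ref{U-evolution}), your integrating factor is exactly $e^{-\int_t^s a}=|U_{(s)}|/|U_{(t)}|$; the paper uses this to write the ODE as $(p|U_{(t)}|)_t=F$ and integrates directly to get $p|U_{(t)}|=-\int_t^T F=O(\Lambda^{-3})$, which avoids your power-weight estimate.
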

\begin{proof}
Recalling the evolution equation of $q$, \eqref{dqdt}, we 
choose $p$ so that
\[
\begin{split}
p_t|U_{(t)}|-p\Theta=-\int_{\unth}^{\ovth} X\cdot e_1\,d\theta
+\{(X\cdot e_1)\sigma_\Sigma\kappa\}
+\frac{1}{\Lambda^2}[\kappa\sin]^{\ovth}_{\unth}+2[\gamma\cdot e_2]^{\ovth}_{\unth}\,.
\end{split}
\]
Notice that this can be written as 
\begin{equation}\label{pt}
(p|U_{(t)}|)_t=-\Delta(t)+\delta(t)\,,
\end{equation}
where $\Delta(t)=-\int_{\unth}^{\ovth} X\cdot e_1\,d\theta$ and $\delta(t)$ is the rest of the right hand side,
and observe that these functions satisfy
\begin{equation}\label{dD}
\Delta(t)=o(\Lambda^{-1}) \text{ and }\delta(t)=O(\Lambda^{-2})\,.
\end{equation}
Indeed, this can be easily verified by expressing the position in terms of the support function,
\[
X=\sigma(\cos\theta, \sin\theta)+\sigma_\theta (-\sin\theta, \cos\theta)\,,
\]
and using the asymptotics \eqref{ks}.  The estimate for $\delta(t)$ is then straightforward after recalling \eqref{Taylor} and \eqref{eq:support_pde}, whereas for $\Delta (t)$ is evident after integration by parts
\[
\Delta (t)=[-\sigma\sin\theta]^{\ovth}_{\unth}+2\int_{\unth}^{\ovth}\sigma\cos\theta d\theta=[(\Lambda^{-1} -e(\theta))\sin\theta]^{\ovth}_{\unth}+2\int_{\unth}^{\ovth}e(\theta)\cos\theta d\theta\,.
\]
Therefore, integrating \eqref{pt} from $t$ to $T$, we obtain
\[
p|U_{(t)}|= O(\Lambda^{-3})\,.
\]
Now, recall the expression for $|U_{(t)}|$ from Section~\ref{areasection}.  
Together with \eqref{ks} and \eqref{Taylor}, this implies that $
|U_{(t)}| = O(\Lambda^{-2})$
and therefore we obtain
\[
p = O(\Lambda^{-1})\,.
\]
Note also that \eqref{pt}, together with the estimates \eqref{dD}, give the required $p_t$ estimate.

Finally, with this choice of $p$, and by \eqref{dqdt}, the evolution for $q$ becomes
\[
\left(\frac{q}{\l^3}\right)_t=0
\]
and since $q(t)\to 0$ and $t\to T$, and $\l^{-1}$ is assumed bounded, integrating implies that $q$ has to be constantly zero.
\end{proof}

We can now obtain estimates on the unstable modes. As it will be convenient to express them in terms of the new time variable $\ttime$, we first note that 
\begin{equation}\label{NTV}
e^{-\ttime}=O(\lambda^{-1})=O(\Lambda^{-1})\,.
\end{equation}
To verify this, recall that, as in defined \eqref{rescaled flow}, the new time variable satisfies
\[
	\begin{cases}
		\frac{d\ttime}{dt} = \lambda^{2} \\
		\ttime(0) = -\frac{1}{2}\log(2T).
	\end{cases}
\]	
Using the inequalities for \(\lambda\) in Lemma \ref{Ut=0}, we have 
\[
\frac{d\ttime}{dt}=\l^2=\Lambda^2+ O(\Lambda)\,,
\]
integration of which yields the estimate
$
e^{\ttime}=O(\Lambda)$ and therefore \eqref{NTV}.

\begin{lemma}\label{lowmodes} With $\l$ and $p$ as in Lemmas \ref{Ut=0} and \ref{qt=0}, the rescaled flow \eqref{rescaled flow} satisfies 
\[
\int_{\unth}^{\ovth}(\tsigma-1)=O(e^{-\ttime})+O(\|\tsigma-1\|_{C^2}^2)
\]
and 
\[
\int_{\unth}^{\ovth}(\tsigma-1)\cos\theta=O(e^{-\ttime})+O(\|\tsigma-1\|_{C^2}^2)\,.
\]
\end{lemma}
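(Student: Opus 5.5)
The plan is to apply the first-order expansions \eqref{eq:unstable_node1} and \eqref{eq:unstable_node2} to the rescaled curve $\tilde\Gamma_{\ttime}=\l(\Gamma_t-p\,e_1)$, whose barrier is the rescaled and translated curve $\tilde\Sigma=\l(\Sigma-p\,e_1)$. The normalisations $U(t)\equiv0$ (Lemma \ref{Ut=0}) and $q(t)\equiv0$ (Lemma \ref{qt=0}) then cancel the leading terms on the right-hand sides. What is left consists of mismatches between $\l$ and $\Lambda$, terms created by the translation $p$, and boundary terms. Each of these must be shown to be $O(\Lambda^{-1})$, which is $O(e^{-\ttime})$ by \eqref{NTV}. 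Throughout I use the following facts. By \eqref{eq:difference_in_angle} and \eqref{thetaS}, the boundary angles of $\Sigma$ satisfy $\theta_\Sigma-\tfrac{3\pi}{2}=O(\Lambda^{-1})$. Hence $\nor_\Sigma\cdot e_1=\cos\theta_\Sigma=O(\Lambda^{-1})$ there, and $\sigma_\Sigma=O(\Lambda^{-2})$ by \eqref{Taylor}. The endpoints of $\Gamma_t$ lie within $O(\Lambda^{-1})$ of the origin, and since $\Sigma$ is tangent to the $x$-axis at $0$, their heights satisfy $X\cdot e_2=O(\Lambda^{-2})$. Finally $\l-\Lambda=O(1)$, $p=O(\Lambda^{-1})$, and the $\Sigma$-portion bounding $U_{(t)}$ has length $O(\Lambda^{-1})$.

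\textbf{Area mode.} Apply \eqref{eq:unstable_node1} to $\tilde\Gamma$; its angle interval is still $[\unth,\ovth]$. The scalings are $|\tilde U|=\l^2|U_{(t)}|$ and $\mu_{\tilde\Sigma}=\l^2\int\sigma^p_\Sigma/\kappa_\Sigma\,d\theta$. The boundary sum is $\sum\tilde X\cdot\nor_\Sigma=\l\{\sigma_\Sigma^p\}$, where $\sigma^p_\Sigma=\sigma_\Sigma-p\cos\theta$. Subtracting the definition \eqref{U-simple} with $U=0$ leaves three kinds of terms. The first is $(\Lambda-\l)\{\sigma_\Sigma\}=O(1)\cdot O(\Lambda^{-2})$. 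The second is the $p$-corrections $\l p\{\cos\theta_\Sigma\}=O(\Lambda)O(\Lambda^{-1})O(\Lambda^{-1})=O(\Lambda^{-1})$. The third is the corresponding $p$-correction $\l^2 p\int\cos\theta\,d\theta$ in $\mu_\Sigma$, taken over an interval of length $O(\Lambda^{-1})$ on which $\cos=O(\Lambda^{-1})$, so it is $O(\Lambda^{-1})$ as well; in fact area is translation invariant, so these corrections must combine consistently. Together with the quadratic remainder $O(\|\tsigma-1\|_{C^2}^2)$, this proves the first estimate.

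\textbf{Centre-of-mass mode.} Apply \eqref{eq:unstable_node2} to $\tilde\Gamma$. The scalings are $3q(\tilde U)=3\l^3q(U^p_{(t)})$, $\int_{\tilde\Sigma}(\tilde X\cdot e_1)\tsigma_\Sigma\,ds=\l^3\int(X_p\cdot e_1)\sigma^p_\Sigma/\kappa_\Sigma\,d\theta$ and $[\tilde X\cdot e_2]=\l[\gamma\cdot e_2]$. Using $q(t)=0$ from \eqref{q-simple}, the following remainders survive:
\begin{itemize}
\item[(i)] $\l(\l^2\Lambda^{-2}-1)[\gamma\cdot e_2]^{\ovth}_{\unth}=O(\Lambda)\,O(\Lambda^{-1})\,O(\Lambda^{-2})$, which is negligible;
\item[(ii)] the discrepancy between $X\cdot e_1,\sigma_\Sigma$ and $X_p\cdot e_1,\sigma_\Sigma^p$ in the $\Sigma$-integral. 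This is bounded by $\l^3\cdot|p|\cdot(\text{integrand size }O(\Lambda^{-1}))\cdot(\text{interval }O(\Lambda^{-1}))$ plus smaller terms, giving $O(\Lambda^3\Lambda^{-3})$ at worst. This needs the sharper observation that each factor carries an extra $O(\Lambda^{-1})$: $\sigma_\Sigma=O(\Lambda^{-2})$ and $\cos\theta=O(\Lambda^{-1})$. With that, the discrepancy is $O(\Lambda^{-1})$;
\item[(iii)] the boundary term $\{(1-\tsigma)\nor_\Sigma\cdot e_1\}$, which is $O(\|\tsigma-1\|_{C^0})\,O(\Lambda^{-1})$ and hence $O(e^{-\ttime})$ (the quadratic term absorbs it in any case).
\end{itemize}
Combined with \eqref{NTV}, this gives the second estimate.

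I expect the main obstacle to be the bookkeeping in (ii): checking that every $\Sigma$-term in \eqref{eq:unstable_node2} for the translated, rescaled barrier matches the corresponding term in \eqref{q-simple} up to $O(\Lambda^{-1})$. The prefactor $\l^3$ is large, so one must extract three powers of $\Lambda^{-1}$ from the length of the $\Sigma$-arc, the quadratic vanishing \eqref{Taylor} of $\sigma_\Sigma$, and the bound $p=O(\Lambda^{-1})$. I would handle this by writing $\Sigma$ near the origin as a graph $y=\tfrac12\kappa_\Sigma(0)x^2+O(x^3)$ and estimating each integrand pointwise on $|x|\lesssim\Lambda^{-1}$.
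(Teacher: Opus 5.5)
Your proposal is correct and follows the paper's proof. You apply \eqref{eq:unstable_node1} and \eqref{eq:unstable_node2} to the normalised curve $\l(\Gamma_t-p\,e_1)$, cancel the leading terms using $U\equiv0$ and $q\equiv0$, and bound the leftover $\l$-versus-$\Lambda$, translation and boundary terms by $O(\Lambda^{-1})=O(e^{-\ttime})$ via \eqref{NTV}. Your bookkeeping is more explicit than the paper's, which only asserts these remainders are $O(\Lambda^{-1})$. At this order each $\Sigma$-term is already individually $O(\Lambda^{-1})$, so the cancellations you track only matter for the sharper Corollary~\ref{lowmodes2}.
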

\begin{proof}
The expression \eqref{eq:unstable_node1}, and the estimates on $\lambda$ given in Lemma \ref{Ut=0}, give
\[
\begin{split}
2\int_{\unth}^{\ovth}(\tsigma-1)&=2\l^2(t)|U_{(t)}|-(\overline\theta-\underline\theta)-\l^2(t)\int^{\ovth_\Sigma}_{\unth_\Sigma}\frac{\sigma^p_\Sigma}{\kappa_\Sigma}\,d\theta-\l(t)\{\sigma^p_\Sigma\}+O(\|\tsigma - 1\|_{C^{2}}^{2})\\
&=U(t)+O(\Lambda^{-1})+O(\|\tsigma - 1\|_{C^{2}}^{2})\,.
\end{split}
\]
The expression \eqref{eq:unstable_node2}, Lemma \ref{Ut=0}, and the estmates on $p$ given in Lemma \ref{qt=0}, give
\[
\begin{split}
2\int_{\unth}^{\ovth}(\tsigma-1)\cos\theta=&
3\l^3(t)q(U^p_{(t)})
-\l^3(t)\int^{\ovth_\Sigma}_{\unth_\Sigma}(X_p\cdot e_1)\frac{\sigma^p_\Sigma}{\kappa_\Sigma}\,d\theta - \l(t)[\gamma\cdot e_2]^{\ovth}_{\unth}\\
&+\sum_{\partial \Gamma}(1-\sigma)(\nor_{\Sigma} \cdot e_{1})+O(\|\tsigma - 1\|_{C^{2}}^{2})\\
=&q(t)+O(\Lambda^{-1})+O(\|\tsigma - 1\|_{C^{2}}^{2})\,.
\end{split}
\]
The result now follows by  Lemmas \ref{Ut=0} and \ref{qt=0}, and \eqref{NTV}.
\end{proof}

\section{\texorpdfstring{$L^2$ decay}{L2  decay}}\label{L2decay}

We consider now the normalised flow as defined in \eqref{rescaled flow} with $\lambda$ and $p$ as defined in Lemmas \ref{Ut=0}, \ref{qt=0}. Then, parametrizing by the Gauss map, and skipping the tildes here, the support function satisfies:
\begin{equation}\label{F-speed}
	\partial_{t}\sigma = L \, \sigma + B\,\sigma\cos\theta - (\sigma_{\theta\theta} + \sigma)^{-1} \eqqcolon F(\sigma)\,,
\end{equation}
where 
\begin{equation}\label{LB}
L:=\frac{\l_t}{\l}\,,\,\, B:=-\lambda p_t\,,
\end{equation}
and note also that $t$ is the new time variable defined in $t\in (-\frac12\log(2T), +\infty)$. Note that, by Lemmas \ref{Ut=0}, \ref{qt=0} and \eqref{NTV},
\begin{equation}\label{LBest}
|L-1|\le Ce^{-t}\,,\,\,|B|\le Ce^{-t}\,.
\end{equation}
Moreover,
the angle domains for the support function are not altered, however with the new time variable their 
\[
\frac{d}{dt}\Theta =O(\l^{-1})= O(e^{-t})
\]
We will show that $\sigma-1$ converges to $0$ exponentially fast.
\begin{theorem}\label{thm:stilde!}
For any $k\in \N$ and $\alpha<1$, there exists $C(k, \alpha)$ such that the rescaled flow, \ref{rescaled flow}, with $\lambda$ and $p$ as defined in Lemmas \ref{Ut=0}, \ref{qt=0}, satisfies 
\begin{equation*}
\|\sigma-1\|_{C^k}\le C(k, \alpha) e^{- \alpha t}\,, \forall t\in  (-\tfrac12\log(2T), +\infty)\,.
\end{equation*}
\end{theorem}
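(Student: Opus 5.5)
We set $u:=\sigma-1$ on the moving interval $[\unth,\ovth]$, where $\Theta=\ovth-\unth\to\pi$ and $\frac{d}{dt}\Theta=O(e^{-t})$, and derive an equation for $u$ from \eqref{F-speed}. Expanding $-(u_{\theta\theta}+u+1)^{-1}$ and using \eqref{LBest}, we get
\[
u_t=u_{\theta\theta}+2u+\mathcal N(u)+O(e^{-t}),
\]
with $\mathcal N(u)=O(\|u\|_{C^2}^2)$. The $O(e^{-t})$ term collects $(L-1)\sigma$ and $B\sigma\cos\theta$. The boundary conditions come from \eqref{eq:support_pde} after rescaling: $u_\theta(\ovth)=\l\sigma_\Sigma(\unth_\Sigma)$ and $u_\theta(\unth)=-\l\sigma_\Sigma(\ovth_\Sigma)$. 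By \eqref{Taylor} and \eqref{NTV} these are $O(e^{-t})$. To remove the moving domain, we pull back to the fixed interval $[0,\pi]$ via the affine map $\theta=\unth+\frac{\Theta}{\pi}\phi$. Since $\Theta-\pi=O(e^{-t})$ by \eqref{eq:difference_in_angle}, and $\unth$ and its time derivative are also $O(e^{-t})$ (up to a fixed rotation), this change of variables only produces further $O(e^{-t})(|u_\phi|+|u_{\phi\phi}|+1)$ errors. We then subtract a fixed smooth corrector $w(\phi,t)$, for instance a quadratic polynomial in $\phi$, matching the Neumann data. This gives $w=O(e^{-t})$ in $C^k$, with $w_t=O(e^{-t})$ as well, since the data are smooth functions of the boundary angles. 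The result is a function $v$ with homogeneous Neumann conditions solving
\[
v_t=v_{\phi\phi}+2v+N(v)+f,\qquad \|f\|\le Ce^{-t}\bigl(1+\|v\|_{C^2}\bigr).
\]

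Next we split $v=v_0+v_1\cos\phi+v^\perp$ in the Neumann eigenbasis $\{\cos j\phi\}$. Lemma \ref{lowmodes}, transported to $v$ at the cost of an $O(e^{-t})$ error, gives
\[
|v_0|+|v_1|\le C\bigl(e^{-t}+\|v\|_{C^2}^2\bigr).
\]
For the high part we compute
\[
\tfrac{d}{dt}\tfrac12\|v^\perp\|_{L^2}^2\le -2\|v^\perp\|^2+\langle N+f,v^\perp\rangle,
\]
because $-j^2+2\le-2$ for $j\ge2$. The difficulty is closing the nonlinearity, which involves second derivatives. For this we would use a higher energy, e.g. $\|v^\perp\|_{H^1}$ or $\|v^\perp\|_{H^2}$, together with the smallness $\|v\|_{C^{k}}=o(1)$ coming from \eqref{1-conv}. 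Interpolating between this decaying $L^2$-type energy and the uniform $C^{k+2}$ bounds of the smooth convergence, we get
\[
\tfrac{d}{dt}\mathcal E\le -(4-\epsilon)\mathcal E+Ce^{-2t}+o(1)\mathcal E .
\]
This yields $\mathcal E^{1/2}\le C_\alpha e^{-\alpha t}$ for every $\alpha<1$; the forcing caps the rate just below $1$. Combined with the low-mode bound, whose quadratic term is absorbed, this gives $\|v\|_{L^2}\le C_\alpha e^{-\alpha t}$.

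Finally, we upgrade to $C^k$ by interpolation: $\|v\|_{C^k}\le C\|v\|_{L^2}^{1-\epsilon}\|v\|_{C^{m}}^{\epsilon}$ for large $m$, using the uniform bounds. Since $\alpha<1$ is arbitrary, the loss of $\epsilon$ in the exponent is harmless. Alternatively, standard parabolic Schauder estimates on unit time intervals, with the Neumann data of size $O(e^{-t})$, give the same conclusion. Undoing the corrector and the reparametrization, both $O(e^{-t})$, returns the bound for $\sigma-1$.

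We expect the main obstacle to be the energy estimate with the moving domain and inhomogeneous boundary data. Ensuring that every error term from the domain change and from the corrector enters only as $O(e^{-t})$ times quantities that are bounded, and not via uncontrolled derivative terms, requires care. It is also necessary to check that the projections in Lemma \ref{lowmodes} remain valid after the pullback to $[0,\pi]$.
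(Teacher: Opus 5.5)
Your proposal is correct, and its skeleton matches the paper's. Both use an $L^2$ energy estimate driven by the gap $2-j^2\le -2$ on the Neumann modes $j\ge 2$, Lemma~\ref{lowmodes} for the modes $j=0,1$, Gagliardo--Nirenberg against the $o(1)$ higher-order bounds from \eqref{1-conv} to absorb the quadratic terms, and interpolation to reach $C^k$. Where you genuinely differ is in how you handle the moving interval and the inhomogeneous Neumann data.

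The paper stays in the original variables with the energy $\int_{\unth}^{\ovth}(\sigma-1)^2\,d\theta$. It puts the moving-endpoint contributions into the first term of \eqref{L2-evolution}, and uses the quadratic extension \eqref{extension}, built separately at each time, only to apply the spectral bound to the quadratic form. This lets Lemma~\ref{lowmodes} be used verbatim. The cost is that the boundary data enter through the extension error \eqref{graderror}. That error comes from intervals of length $O(e^{-t})$ on which the extension has $O(1)$ second derivative. Up to $O(e^{-3t})$, the term kept there is the integration-by-parts boundary term $[(\sigma-1)\sigma_\theta]_{\unth}^{\ovth}$, i.e.\ $e^{-t}$ times boundary values of $\sigma-1$. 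So it has to be controlled through boundary (trace or $C^0$) information rather than the $L^2$ norm alone.

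Your affine pullback plus corrector instead turns the data into a bulk forcing that is $O(e^{-t})$ in $L^\infty$ on all of $[0,\pi]$. Absorbing it then needs only $\|v^\perp\|_{L^2}$, and on the fixed interval the projection onto $\{\cos j\phi\}_{j\ge 2}$ commutes with $\partial_t$. The price is the drift and diffusion-coefficient errors from the time-dependent change of variables, plus transporting Lemma~\ref{lowmodes} from $\cos\theta$ to $\cos\phi$. You correctly identify both as $O(e^{-t})$ perturbations. A side benefit of projecting exactly: the forcings $L-1$ and $B\cos\theta$ are orthogonal to $v^\perp$ up to $O(e^{-t})$ errors. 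Hence your high-mode estimate uses only the smallness of $L-1$ and $B$, not the rate in \eqref{LBest}.

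Two minor points. First, you do not need an $H^1$ or $H^2$ energy. The bound $\|v\|_{C^2}\le C\|v\|_{C^7}^{1/3}\|v\|_{L^2}^{2/3}+C\|v\|_{L^2}$ with $\|v\|_{C^7}=o(1)$ already gives $|\langle N(v),v^\perp\rangle|\le C\|v\|_{C^2}^2\|v^\perp\|_{L^2}=o(1)\|v\|_{L^2}^2$, exactly as in the paper. Second, your differential inequality yields $\|v\|_{L^2}\le Ce^{-t}$ outright; the loss to $e^{-\alpha t}$ with $\alpha<1$ enters only at the interpolation step.
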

\begin{remark} To prove Theorem \ref{thm:stilde!} we first prove  decay of the $L^2$ norm, which is a bit stronger, 
\[
\|\sigma-1\|_{L^2}\le C e^{-2t}\,,\,\,\forall t\in  (-\tfrac12\log(2T), +\infty)\,.
\]
On higher derivatives the decay factor has to drop because we use interpolation to achieve the estimates.
\end{remark}
\begin{proof}
We will show decay of $\|\sigma-1\|_{L^2(\unth, \ovth)}$ by computing and estimating its evolution. The higher order norm estimates will then follow via interpolation.

We consider $L, B$, as defined in \eqref{LB}, \emph{solely} as  functions of $t$, so that the linearisation of the speed \(F\), as in \eqref{F-speed},  about \(\sigma = 1\) and along  \(v \coloneqq \tfrac{d}{d\varepsilon}\big|_{\varepsilon = 0}\sigma_{\varepsilon}\) is given by 
\[
	DF\big|_{1}v = v_{\theta\theta} + (L + 1)v + B v\cos\theta\,.
\]
Therefore we obtain
\begin{equation}\label{L2-evolution}
\begin{split}
	\frac{d}{dt}\int_{\underline{\theta}}^{\overline{\theta}}(\sigma - 1)^{2}d\theta &\leq Ce^{-t}\|\sigma - 1\|_{C^{0}}^{2} \\
    &+ 2\int_{\underline{\theta}}^{\overline{\theta}}\Big[(\sigma - 1)\Big(F(1) + DF\big|_{1}(\sigma - 1)\Big) + C|\sigma - 1|\|\sigma - 1\|_{C^{2}}^{2}\Big]d\theta \,.
\end{split}
\end{equation}
Since 
\[
	F(1) = L-1+B\cos\theta, 
\]
we have, by Lemma \ref{lowmodes} and \eqref{LBest}, and applying the Cauchy-Schwarz inequality,
\[
2\int_{\unth}^{\ovth}(\sigma-1) F(1)\,d\theta\le  C\left(e^{-2t}+\|\sigma-1\|_{C^2}^4\right)\,.
\]
The last term on the right hand side of \eqref{L2-evolution} is estimated, using H\"older's inequality and Cauchy-Schwarz, as
\[
\int_{\underline{\theta}}^{\overline{\theta}} |\sigma - 1|\|\sigma - 1\|_{C^{2}}^{2}d\theta \le C 
\|\sigma-1\|_{C^2}^2\|\sigma-1\|_{L^2}\le \frac{C}{\epsilon}
\|\sigma-1\|_{C^2}^4+\epsilon \|\sigma-1\|^2_{L^2}\,,
\]
for any $\epsilon>0$.

Finally,  for the gradient term we have 
\[
2\int_{\underline{\theta}}^{\overline{\theta}}(\sigma - 1)DF\big|_{1}(\sigma - 1) d\theta =
2 \int_{\underline{\theta}}^{\overline{\theta}}(\sigma - 1)\Big((\sigma - 1)_{\theta\theta} + 2(\sigma - 1)\Big)d\theta+O(e^{-2t})+O(\|\sigma-1\|_{C^2}^4)\,.
\]

In order to estimate the first term on the right hand side
we construct a \(C^{1}\) (piecewise \(C^{2}\)) extension  of \(\sigma-1\) in the interval \([0,\pi]\) with Neumann boundary condition at the endpoints. This construction is performed independently for each time slice; hence, we suppress the  $t$-dependence in the notation. We define the extension by setting
\begin{equation}\label{extension}
	\overline \sigma(\theta)-1 = \begin{cases}
		\underline{a}\theta^{2} + \underline{b}\theta +\underline{c} & \theta \in [0,\underline{\theta}] \\
		\overline{a}\theta^{2} + \overline{b}\theta + \overline{c} & \theta \in [\overline{\theta},\pi],
	\end{cases}
\end{equation}
where
\[
	\underline{a} = \frac{\sigma_{\theta}(\underline{\theta})}{2\underline{\theta}}, \hspace{0.5cm} \underline{b} = 0, \hspace{0.5cm} \underline{c}= \sigma(\underline{\theta})-1 - \underline{a}\:\underline{\theta}^{2}, 
\]
and
\[
	\overline{a} = - \frac{\sigma_{\theta}(\overline{\theta})}{2(\pi - \overline{\theta})}, \hspace{0.5cm} \overline{b} = -2\pi\overline{a}, \hspace{0.5cm} \overline{c} = \sigma(\overline{\theta})-1 - \overline{a}\overline{\theta}^{2} - \overline{b}\:\overline{\theta}.
\]

 Recalling \eqref{eq:support_pde} and \eqref{Taylor}, we note that $\underline a$, $\overline a$ are bounded, $\osigma$ is decreasing in $[0, \unth]$ and increasing in $[\ovth, \pi]$, and also
 \begin{equation}\label{ext-diff}
\|\sigma-1\|_{C^0}\le \|\overline\sigma-1\|_{C^0}\le \|\sigma-1\|_{C^0}+ Ce^{-2t}\,.
 \end{equation}

The gradient term we want to estimate, can now be written as  
\[
\int_{\underline{\theta}}^{\overline{\theta}}(\sigma - 1)\Big((\sigma - 1)_{\theta\theta} + 2(\sigma - 1)\Big)d\theta=\int_{0}^{\pi}(\osigma - 1)\Big((\osigma - 1)_{\theta\theta} + 2(\osigma - 1)\Big)d\theta+ Err(\osigma, \sigma)\,,
\]
where the error term, $Err(\osigma, \sigma)$, can be bounded  as follows.
\begin{equation}\label{graderror}
\begin{split}
Err(\osigma, \sigma)=&-\int_{[0,\pi]\setminus[\underline\theta,\overline\theta]}(\bar\sigma - 1)\Big((\bar\sigma - 1)_{\theta\theta} + 2(\bar\sigma - 1)\Big)d\theta\\
&\le -\int_{[0,\pi]\setminus[\underline\theta,\overline\theta]}(\bar\sigma - 1)(\bar\sigma - 1)_{\theta\theta} d\theta
\le C e^{-t}\|\osigma-1\|_{L^2}\le \frac{C}{\epsilon} e^{-2t}+\epsilon\|\sigma-1\|^2_{L^2} \,,
\end{split}
\end{equation}
for any $\epsilon>0$.

We finally estimate the term 
\[
\int_{0}^{\pi}(\osigma - 1)\Big((\osigma - 1)_{\theta\theta} + 2(\osigma - 1)\Big)d\theta
\]
by expressing   $\osigma-1$   as
$\osigma-1=\sum_j\sigma_j\phi_j$ in $L^2([0, \pi])$, where $\phi_j=\cos (j\theta)$ and  $\sigma_j=\langle \osigma-1,\phi_j\rangle$.
Since $\osigma$ it is a $C^1$, and piecewise $C^2$ function, with Neumann boundary data, we have
\[
\int_0^\pi\phi(\osigma_{\theta\theta}+2\osigma)=\int_0^\pi\osigma(\phi_{\theta\theta}+2\phi)\,,
\]
for any $\phi$ also satisfying Neumann boundary data. 
The $L^2$ convergence, implies weak $L^2$ convergence and therefore
\begin{equation}\label{IBP}
\begin{split}
\int_0^\pi(\osigma-1)&((\osigma-1)_{\theta\theta}+2(\osigma-1))\\&=
\int_0^\pi\sum_j\sigma_j \phi_j((\osigma-1)_{\theta\theta}+2(\osigma-1))\\
&=\lim_N\sum^N_j\sigma_j \int_0^\pi(\osigma-1)((\phi_j)_{\theta\theta}+2\phi_j)\\
&=\lim_N\sum^N_j(2-j^2)\sigma_j \int_0^\pi(\osigma-1)\phi_j=\lim_N\sum^N_j(2-j^2)\sigma_j^2 \\
&\le 4(\sigma_0^2+\sigma_1^2)-2\lim_N\sum_{j=0}^N\sigma_j^2=4(\sigma_0^2+\sigma_1^2)-2\|\osigma-1\|_{L^2}^2\,.
\end{split}
\end{equation}
The two unstable nodes can be controled, using \eqref{ext-diff} and Lemma \ref{lowmodes},
\begin{equation}\label{s0s1}
\begin{split}
	\sigma_{0}^{2} + \sigma_{1}^{2} &\leq C\left[\left(\int_{\underline{\theta}}^{\overline{\theta}}(\sigma - 1)d\theta\right)^{2} + \left(\int_{\underline{\theta}}^{\overline{\theta}}(\sigma - 1)\cos\theta d\theta\right)^{2} + e^{-4t} + \|\sigma - 1\|_{C^{2}}^{4}\right]\\
    &\le C\left(e^{-2t} + \|\sigma - 1\|_{C^{2}}^{4}\right)
    \end{split}\,. 
\end{equation}
Therefore, replacing all the above estimates in \eqref{L2-evolution}, we obtain, for any $\epsilon>0$,
\[
\begin{split}
	\deriv{t}{\|\sigma - 1\|_{L^{2}}^{2}} \leq \tfrac{C}{\epsilon} (e^{-2t}+ \|\sigma - 1\|_{C^{2}}^{4}) - (4-\epsilon)\|\sigma - 1\|_{L^{2}}^{2} \,.
    \end{split}
\]

The Gagliardo-Nirenberg interpolation inequality in bounded domains yields
\[
\|\sigma - 1\|_{C^2}\le C \|\sigma - 1\|^\frac{1}{3}_{C^{7
}}\|\sigma - 1\|_{L^2}^{\frac{2}{3}}+C\|\sigma - 1\|_{L^2}\,.
\]

Since, by \eqref{1-conv}, $\|\sigma-1\|_{C^k}=o(1)$, for all $k\in \N$,  we can absorb the  $\|\sigma - 1\|_{C^{2}}^{4}$  term and obtain that for any $\epsilon>0$, there exists $t_\epsilon\ge -\frac12\log(2T)$ such that 
\[
	\deriv{t}{\|\sigma - 1\|_{L^{2}}^{2}} \leq Ce^{-2t}  - (4-\epsilon)\|\sigma - 1\|_{L^{2}}^{2}\,,\,\,\forall t\ge t_\epsilon\,. 
\]
This then yields that 
\begin{equation}\label{stilde}
    \|\sigma-1\|_{L^2}\le C e^{-t}\,,\,\,     \forall t\ge \tfrac12\log(2T),
\end{equation}
with $C$ depending on $\|\sigma(\cdot, \tfrac12\log(2T))-1\|_{C^0}$ and $T$, since we consider the estimate in the whole time interval.
Interpolation, via for instance the Gagliardo-Nirenberg inequality as above, yields the required estimates.
\end{proof}

We can now use this to get estimates for the original (unscaled) solution.

\begin{theorem}\label{first decay}
The support function of the free boundary curve shortening flow solution $\{\Gamma_t\}_{t\in [0, T)}$ satisfies, for any $\alpha<1$, 
\[
\sigma=\sqrt{2(T-t)}+O((T-t)^{\frac{1+\alpha}{2}})
\]
and 
\[
\kappa=\frac{1}{\sqrt{2(T-t)}}+O((T-t)^{\frac{\alpha-1}{2}})\,.
\]
\end{theorem}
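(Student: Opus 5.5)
We will transfer the estimate of Theorem \ref{thm:stilde!} back to the unrescaled flow by undoing the normalisation \eqref{rescaled flow}. In the rescaled picture the support function is $\tsigma(\theta,\ttime)=\l(t)\,\sigma^{p}(\theta,t)$, where $\sigma^p=X_p\cdot\nor=\sigma-p\cos\theta$. Applying Theorem \ref{thm:stilde!} with $k=0$ and $k=2$ gives
\[
\|\tsigma-1\|_{C^2}\le C(\alpha)e^{-\alpha\ttime}.
\]
By \eqref{NTV} we have $e^{-\ttime}=O(\Lambda^{-1})$, so $e^{-\alpha\ttime}=O(\Lambda^{-\alpha})=O((T-t)^{\alpha/2})$.

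Hence
\[
\sigma=\frac{1}{\l}\bigl(1+O(\Lambda^{-\alpha})\bigr)+p\cos\theta .
\]
Next we replace $\l$ by $\Lambda$. Lemma \ref{Ut=0} gives $\l-\Lambda=O(1)$, so
\[
\frac1\l-\frac1\Lambda=\frac{\Lambda-\l}{\l\Lambda}=O(\Lambda^{-2}).
\]
Lemma \ref{qt=0} gives $p=O(\Lambda^{-1})$. That bound is not good enough, since it only produces an $O(\sqrt{T-t})$ error, which is the same size as the leading term. This is the main obstacle.

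To get around it I would use the improved information already contained in the proof of Lemma \ref{qt=0}. There $p|U_{(t)}|=O(\Lambda^{-3})$ is obtained from $\Delta=o(\Lambda^{-1})$. With the rescaled decay now available, $e=O(\Lambda^{-1-\alpha})$, and the integration-by-parts formula for $\Delta$ then gives
\[
\Delta=O(\Lambda^{-1-\alpha})+\bigl[\Lambda^{-1}\sin\theta\bigr]_{\unth}^{\ovth}.
\]
The boundary term is $O(\Lambda^{-3})$, because $\sin\ovth$ and $\sin\unth$ are $O(\Lambda^{-1})$ by \eqref{eq:difference_in_angle}, and I expect cancellation between the two endpoints.

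I will integrate \eqref{pt} from $t$ to $T$. Since $\int_t^T\Lambda^{-1-\alpha}\,ds=O(\Lambda^{-3-\alpha})$ and $|U_{(t)}|\sim\Lambda^{-2}$ from below, this yields $p=O(\Lambda^{-1-\alpha})$. The lower bound on $|U_{(t)}|$ follows from $U(t)=0$, which gives $2\l^2|U_{(t)}|\approx\Theta\approx\pi$. If this bootstrap needs it, I would iterate once, first with a weaker $\alpha$ and then with a stronger one. With $p=O(\Lambda^{-1-\alpha})$ we get
\[
\sigma=\Lambda^{-1}+O(\Lambda^{-1-\alpha})=\sqrt{2(T-t)}+O\bigl((T-t)^{\frac{1+\alpha}{2}}\bigr).
\]

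For the curvature, note that the curvature of the rescaled curve is $\tkappa=(\tsigma_{\theta\theta}+\tsigma)^{-1}=1+O(e^{-\alpha\ttime})$ by the $C^2$ estimate. Translations do not change curvature, so
\[
\kappa=\l\tkappa=\l\bigl(1+O(\Lambda^{-\alpha})\bigr)=\Lambda+O(1)+O(\Lambda^{1-\alpha}).
\]
Since $\alpha<1$, the $O(1)$ term is absorbed, and $\Lambda^{1-\alpha}=(T-t)^{(\alpha-1)/2}$ up to a constant. This gives the stated asymptotics for $\kappa$.
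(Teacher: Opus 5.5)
Your plan follows the paper's outline: undo the normalisation, recognise that the bound $p=O(\Lambda^{-1})$ of Lemma \ref{qt=0} is the obstruction, improve $p$ using the rescaled decay, and read off $\kappa=\lambda\tilde{\kappa}$. The curvature part is fine, but the step that improves $p$ is circular. In \eqref{ks}, $e=\sigma-\Lambda^{-1}$ is the support function measured from the fixed origin, whereas Theorem \ref{thm:stilde!} only controls $\tilde{\sigma}=\lambda(\sigma-p\cos\theta)$. What the rescaled decay actually gives is $e=p\cos\theta+O(\Lambda^{-1-\alpha})$. With the bound on $p$ currently available this is only $O(\Lambda^{-1})$, so ``$e=O(\Lambda^{-1-\alpha})$'' presupposes the estimate $p=O(\Lambda^{-1-\alpha})$ you are trying to prove. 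Equivalently, since $\tilde X\cdot e_1=\lambda(X\cdot e_1-p)$,
\[
\Delta=-\int_{\unth}^{\ovth}X\cdot e_1\,d\theta=-\frac{1}{\lambda}\int_{\unth}^{\ovth}\tilde X\cdot e_1\,d\theta-p\,\Theta,\qquad \Theta\to\pi ,
\]
so $\Delta=-p\Theta+O(\Lambda^{-1-\alpha})$ cannot be bounded better than $p$ itself. Inserting $p=O(\Lambda^{-\beta})$ and integrating \eqref{pt} from $t$ to $T$ gives $p|U_{(t)}|=O(\Lambda^{-2-\beta})+O(\Lambda^{-3-\alpha})$, i.e.\ $p=O(\Lambda^{-\beta})+O(\Lambda^{-1-\alpha})$. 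Starting from $\beta=1$ you are back where you began, so iterating does not help.

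The missing observation is an exact cancellation. Consider the equation defining $p$ in Lemma \ref{qt=0}, $p_t|U_{(t)}|-p\Theta=-\int_{\unth}^{\ovth}X\cdot e_1\,d\theta+\delta(t)$. The term $-p\Theta$ on the left is what turns the left side into $(p|U_{(t)}|)_t$, because $\tfrac{d}{dt}|U_{(t)}|=-\Theta$. It cancels against the identical term $-p\Theta$ contributed by $-\int X\cdot e_1$ on the right, leaving
\[
p_t\,|U_{(t)}|=-\frac{1}{\lambda}\int_{\unth}^{\ovth}\tilde X\cdot e_1\,d\theta+\delta(t),
\]
in which $p$ appears only through rescaled quantities. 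Write $\tilde X\cdot e_1=\tilde{\sigma}\cos\theta-\tilde{\sigma}_\theta\sin\theta$. The $C^1$ decay of $\tilde{\sigma}$ together with $[\sin\theta]_{\unth}^{\ovth}=O(\Lambda^{-1})$ gives $\int\tilde X\cdot e_1\,d\theta=O(\Lambda^{-\alpha})$. Hence $p_t=O(\Lambda^{1-\alpha})$, and integrating up to $T$ gives $p=O(\Lambda^{-1-\alpha})$, which is all you need. This is the paper's argument; it does slightly better, using integration by parts and Lemma \ref{lowmodes} to get $\int\tilde X\cdot e_1\,d\theta=O(\Lambda^{-1})$, so $p_t=O(1)$ and $p=O(\Lambda^{-2})$. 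Keeping your integrated form would also work, but only if you treat $\tfrac{\Theta}{|U_{(t)}|}\,p|U_{(t)}|$ as a damping term and solve with an integrating factor rather than bounding it crudely. Finally, the endpoint cancellation you hope for in $[\Lambda^{-1}\sin\theta]_{\unth}^{\ovth}$ is unproven but also unnecessary, since the trivial bound $O(\Lambda^{-2})$ suffices.
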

\begin{proof}
By Theorem \ref{thm:stilde!}, and \eqref{NTV}, for any $\alpha <1$,
\[
|\l(\sigma-p)-1|=|\tsigma-1|=O(\Lambda^{-\alpha})
\]
and
\[
|\l^{-1}\kappa-1|=|\tkappa-1|=O(\Lambda^{-\alpha})\,,
\]
where here we reintroduce tildes for the rescaled flow \eqref{rescaled flow}.
The estimate for $\kappa$ is then immediate recalling the estimates for $\l$ from Lemma \ref{Ut=0}. To prove the estimate for the support function, 
we will first show that we can improve the estimates on $p$ given in 
in Lemma \ref{qt=0}. Recall that 
\[
p_t=-\frac{1}{|U_{(t)}|}\int_{\unth}^{\ovth} X_p\cdot e_1\,d\theta+\frac{\delta(t)}{|U_{(t)}|}=-\frac{1}{|U_{(t)}|\l}\int_{\unth}^{\ovth} \tilde X\cdot e_1\,d\theta+\frac{\delta(t)}{|U_{(t)}|}\,,
\]
where $\delta(t)$ is as defined in \eqref{dD}.
Since $\tilde X\cdot e_1=\tsigma\cos\theta-\tsigma_\theta\sin\theta$, we have 
\[
\begin{split}
\int_{\unth}^{\ovth} \tilde X\cdot e_1&=\int_{\unth}^{\ovth} \tsigma\cos\theta-\tsigma_\theta\sin\theta\,d\theta=2\int_{\unth}^{\ovth} \tsigma\cos\theta\,d\theta-[\tsigma\sin\theta]_{\unth}^{\ovth}\\
&=2\int_{\unth}^{\ovth} (\tsigma-1)\cos\theta\,d\theta+[(2-\tsigma)\sin\theta]_{\unth}^{\ovth}\,.
\end{split}
\]
Therefore, by Lemma \ref{lowmodes}, the above quantity is of the order $O(\Lambda^{-1})$ and, together with \eqref{dD}, we obtain the improved estimate.
\[
p_t= O(1)\,.
\]
Since $p(T)=0$, integrating from $t$ to $T$ we find
\begin{equation}\label{newp}
p(t)= O(\Lambda^{-2})\,.
\end{equation}
We can now get the required estimate by scaling back and recalling the estimates for $\l$ from Lemma \ref{Ut=0}
\[
\sigma=\frac{1}{\Lambda}+ O(\Lambda^{-1-\alpha})\,.
\]
\end{proof}

Note that in the proof of Theorem \ref{first decay}, we showed that after we know a certain decay on $\sigma$ (Lemma \ref{lowmodes} is sufficient), we can go back to Lemma \ref{qt=0} and improve on the estimate for $p$. We claim that we can also improve the estimate for $\lambda$ in Lemma \ref{Ut=0}. To do this, note that the quantities $E$ and $e$, as defined in \ref{ks}, due to Theorem \ref{first decay}, satisfy the improved estimates
\begin{equation}\label{Ee2}
E(\theta)=O(\Lambda^{1-\alpha})\,,\,\,e(\theta)=O(\Lambda^{-1+\alpha})\,,\,\, \forall \alpha <1\,.
\end{equation}
Therefore, following the proof of Lemma \ref{Ut=0}, the ODE that $\lambda$ solves can be better estimated as 
\[
 2\frac{\l_t}{\l^3}= 2+O\left(\frac{1}{\Lambda^{1+\alpha}}\right)+\frac{1}{\l^2}O(\Lambda^{1-\alpha})\,.
\]
Hence, a solution with $\l(t)\to \infty$ as $t\to T$, satisfies
\[
\frac{1}{\l^2}=\frac{1}{\Lambda^2}+O(\Lambda^{-3-\alpha})\,,
\]
which by rearranging gives
\begin{equation}\label{newl}
\l=\Lambda+O(\Lambda^{-\alpha})\,,\,\,\forall \alpha<1\,.
\end{equation}

We can now improve Lemma \ref{lowmodes} to the following.
\begin{Corollary}\label{lowmodes2} With $\l$ and $p$ as in Lemmas \ref{Ut=0} and \ref{qt=0}, the rescaled flow \eqref{rescaled flow} satisfies, for any $\alpha<1$,
\[
\int_{\unth}^{\ovth}(\tsigma-1)=O(e^{-(1+\alpha)\ttime})+O(\|\tsigma-1\|_{C^2}^2)
\]
and 
\[
\int_{\unth}^{\ovth}(\tsigma-1)\cos\theta=O(e^{-(1+\alpha)\ttime})+O(\|\tsigma-1\|_{C^2}^2)\,.
\]
\end{Corollary}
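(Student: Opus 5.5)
We rerun the proof of Lemma \ref{lowmodes}, this time keeping track of every error term that the cruder estimates of Lemmas \ref{Ut=0} and \ref{qt=0} had lumped into $O(\Lambda^{-1})$. The identities \eqref{eq:unstable_node1} and \eqref{eq:unstable_node2} are exact up to the quadratic remainder $O(\|\tsigma-1\|_{C^2}^2)$. After rescaling, they express $2\int(\tsigma-1)$ as $U(t)$ plus a discrepancy term, and $\int(\tsigma-1)\cos\theta$ as $q(t)$ plus a discrepancy term. Since $U\equiv 0$ and $q\equiv 0$, the claim reduces to showing that each discrepancy is $O(\Lambda^{-1-\alpha})$ for every $\alpha<1$. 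Once that is done, \eqref{NTV} converts this into $O(e^{-(1+\alpha)\ttime})$. Note that \eqref{NTV} itself is unchanged, and by \eqref{newl} we have $e^{\ttime}\sim\Lambda$ up to bounded factors.

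For the area mode, $U$ was defined with $\Lambda\{\sigma_\Sigma\}$, whereas the rescaled version of \eqref{eq:unstable_node1} produces $\l\{\sigma^p_\Sigma\}$. The discrepancy is therefore $(\l-\Lambda)\{\sigma_\Sigma\}+\l\{\sigma^p_\Sigma-\sigma_\Sigma\}$. By \eqref{newl} and \eqref{Taylor}, the first piece is $O(\Lambda^{-\alpha}\Lambda^{-2})$. The second piece equals $\l\, p\{\nor_\Sigma\cdot e_1\}$. Near $\theta=3\pi/2$ we have $\nor_\Sigma\cdot e_1=O(\Lambda^{-1})$, and \eqref{newp} gives $p=O(\Lambda^{-2})$, so this piece is $O(\Lambda^{-2})$. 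We must also check that the bulk $\Sigma$-integral $\l^2\int\sigma_\Sigma/\kappa_\Sigma$ and the boundary terms in \eqref{eq:unstable_node1} are consistent with the normalisation, with translation corrections of the same order. Each such term involves an integral over an angular interval of length $O(\Lambda^{-1})$ of quantities that are $O(\Lambda^{-2})$, multiplied by $\l^2 p$, and so is $O(\Lambda^{-3})$ after scaling. Finally, $\Theta$ enters identically on both sides.

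For the cosine mode, the rescaled \eqref{eq:unstable_node2} differs from $q(t)$ by two kinds of term. The first consists of the boundary terms $\sum_{\partial\Gamma}(1-\tsigma)(\nor_\Sigma\cdot e_1)$. The second consists of the scaling mismatch between $\l$ and $\Lambda$ in the $[\gamma\cdot e_2]$ term: in $q$ this term appears with $\l^3\Lambda^{-2}$, whereas the rescaled identity requires $\l$. For the first kind, Theorem \ref{first decay} gives $1-\tsigma=O(\Lambda^{-\alpha})$, and $\nor_\Sigma\cdot e_1=O(\Lambda^{-1})$ at the contact points, so the product is $O(\Lambda^{-1-\alpha})$. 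For the second kind, $\l(\l^2\Lambda^{-2}-1)[\gamma\cdot e_2]=O(\Lambda\cdot\Lambda^{-1-\alpha})\,O(\Lambda^{-1})$, since $[\gamma\cdot e_2]$ is a difference of heights of the two endpoints, each of size $O(\Lambda^{-1})$ or smaller. This is better than needed. The translation corrections arising from $X_p$ versus $X$ in the $\Sigma$-integral again carry a factor $p=O(\Lambda^{-2})$ against $\l^3$ times an integral of size $O(\Lambda^{-3})$, giving $O(\Lambda^{-2})$.

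The main obstacle is the bookkeeping of the boundary and $\Sigma$-terms. We need to confirm that, with the improved estimates \eqref{Ee2}, \eqref{newl} and \eqref{newp}, no term of exact order $\Lambda^{-1}$ survives. The only candidates at that order were the pieces controlled in Lemma \ref{lowmodes} by the crude bounds $\l-\Lambda=O(1)$, $p=O(\Lambda^{-1})$ and $\tsigma-1=o(1)$. Each of these now gains a factor $\Lambda^{-\alpha}$, which yields the stated $O(e^{-(1+\alpha)\ttime})$.
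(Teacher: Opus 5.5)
Your proposal is correct and takes essentially the same route as the paper: write both modes in rescaled form via \eqref{eq:unstable_node1} and \eqref{eq:unstable_node2}, use $U\equiv 0$ and $q\equiv 0$, show that every discrepancy term is $O(\Lambda^{-1-\alpha})$ using \eqref{newl} and \eqref{newp}, and convert to $e^{-(1+\alpha)\ttime}$ via \eqref{NTV}. Your bookkeeping is more explicit than the paper's, including the boundary sum $\sum_{\partial\Gamma}(1-\tsigma)(\nor_\Sigma\cdot e_1)$, which the paper bounds without saying so; your only slips are harmless order counts (the translation correction to the bulk $\Sigma$-integral is $O(\Lambda^{-2})$ rather than $O(\Lambda^{-3})$, and the endpoint heights are $O(\Lambda^{-2})$, not merely $O(\Lambda^{-1})$), and every term still lands within $O(\Lambda^{-1-\alpha})$.
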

\begin{proof}
Note that by \eqref{eq:unstable_node1} and \eqref{eq:unstable_node2}, together with the new estimates on $p$ and $\lambda$ given in \eqref{newp}, \eqref{newl}, we have, for any $\alpha<1$, 
\[
\begin{split}
2\int_{\unth}^{\ovth}(\tsigma-1)&=2\l^2(t)|U_{(t)}|-(\overline\theta-\underline\theta)-\l^2(t)\int^{\ovth_\Sigma}_{\unth_\Sigma}\frac{\sigma^p_\Sigma}{\kappa_\Sigma}\,d\theta-\l(t)\{\sigma^p_\Sigma\}+O(\|\tsigma - 1\|_{C^{2}}^{2})\\
&=U(t)+O(\Lambda^{-1-\alpha})+O(\|\tsigma - 1\|_{C^{2}}^{2})\,,
\end{split}
\]
and 

\[
\begin{split}
2\int_{\unth}^{\ovth}(\tsigma-1)\cos\theta=&
3\l^3(t)q(U^p_{(t)})
-\l^3(t)\int^{\ovth_\Sigma}_{\unth_\Sigma}(X_p\cdot e_1)\frac{\sigma^p_\Sigma}{\kappa_\Sigma}\,d\theta - \l(t)[\gamma\cdot e_2]^{\ovth}_{\unth}\\
&+\sum_{\partial \Gamma}(1-\sigma)(\nor_{\Sigma} \cdot e_{1})+O(\|\tsigma - 1\|_{C^{2}}^{2})\\
=&q(t)+O(\Lambda^{-1-\alpha})+O(\|\tsigma - 1\|_{C^{2}}^{2})\,.
\end{split}
\]
The result now follows by  Lemmas \ref{Ut=0} and \ref{qt=0}, and \eqref{NTV}.
\end{proof}

\begin{remark} Even though we can improve on the decay for the unstable modes, we cannot expect any better decay for the full $L^2$ norm, which is due to the boundary values. In particular, on the boundary we have $|\tsigma_\theta|=\tsigma_\Sigma$ which is of the order $e^{-t}$. Recalling \eqref{thetaev} this cannot be improved.
\end{remark}

\subsection*{Acknowledgements}

TB and NB were supported by the grant NSF 2405007. ML was supported by the ARC Discovery Project grant DP250103952.

\printbibliography

\end{document}